\def\AAA{\mathcal{A}}
\def\BBB{\mathcal{B}}
\def\T{\mathbb{T}}
\def\R{\mathbb{R}}
\newcommand {\eps}  {\varepsilon}
\newcommand{\grad}{\nabla}
\newtheorem{thm}{Theorem}[section]
\newtheorem{prop}[thm]{Proposition}
\theoremstyle{definition}
\newtheorem{rem}{Remark}[section]
\numberwithin{equation}{section}
\numberwithin{thm}{section}
\numberwithin{rem}{section}
\numberwithin{defn}{section}
\newenvironment{acknowledgment}{{\flushleft \bf Acknowledgment:}}{}
\begin{document}
\footnotetext{{\it Date:} \today.
%
%
}

\title[AP scheme for kinetic models with singular limit]
{An asymptotic preserving scheme for kinetic models with singular limit}

\author{Alina Chertock}
\address{Alina Chertock\\ Department of Mathematics\\ North Carolina State University\\ Campus Box 8205, Raleigh NC 27695\\USA}
\email{chertock@math.ncsu.edu}

\author{Changhui Tan}
\address{Changhui Tan\\
Department of Mathematics\\
Rice University\\
6100 Main St., Houston TX 77005\\
USA}
\email{ctan@rice.edu}

\author{Bokai Yan}
\address{Bokai Yan\\ Department of Mathematics \\ University of California, Los Angeles \\ 520 Portola Plaza, Los Angeles, CA 90095 \\
USA}
\email{yanbokai@gmail.com}

\setcounter{page}{1}

\begin{abstract}
We propose a new class of asymptotic preserving schemes to solve kinetic equations with mono-kinetic singular limit. The main idea to deal
with the singularity is to transform the equations by appropriate
scalings in velocity. In particular, we study two biologically related kinetic 
systems. We derive the scaling factors, and prove that the rescaled solution does not have a singular limit, under appropriate spatial
non-oscillatory assumptions, which can be verified numerically by a newly developed asymptotic preserving scheme. We set up a few numerical
experiments to demonstrate the accuracy, stability, efficiency and asymptotic preserving property of the schemes.
\end{abstract}

\maketitle
\vspace*{-0.8cm}
\tableofcontents

\section{Introduction}
We consider the following type of kinetic equations,
\begin{equation}\label{eq:general}
\begin{split}
&\partial_tf_\eps+v\cdot\grad_x
f_\eps=\frac{1}{\eps}Q(f_\eps),\\
&f_\eps(0,x,v) = f^0(x,v).
\end{split}
\end{equation}
Here, $f_\eps=f_\eps(t,x,v)$ is the probability density function at
time $t\geq0$,  of space variable $x=(x_1,\ldots,x_d)^T\in\Omega$ and velocity 
$v=(v_1,\ldots,v_d)^T\in\R^d$, with spatial domain $\Omega=\R^d$ or $\T^d$. $Q$ is the interaction operator, which can be nonlinear in 
$f_\eps$ and nonlocal in $x$ and $v$.

The main property of the interaction operator $Q$ of our concern is that it has a \emph{mono-kinetic} equilibrium, namely
\[
Q(f)=0\quad\Leftrightarrow\quad
f(t,x,v)=\rho(t,x)\delta_{v=u(t,x)},
\]
where $\delta$ is the Dirac delta distribution, and $\rho(t,x)$ and $u(t,x)=(u_1(t,x),\ldots,u_d(t,x))^T$ are macroscopic density and 
velocity, respectively, satisfying
\[
\rho(t,x)=\int_{\R^d} f(t,x,v)dv,\quad \rho(t,x)u(t,x)=\int_{\R^d} v f(t,x,v)dv.
\]
Under this setup, one can formally let $\eps\to 0$ in \eqref{eq:general} and obtain an asymptotic solution, 
\begin{equation}\label{eq:asymp}
\lim_{\eps\to0}f_\eps(t,x,v)=\rho(t,x)\delta_{v=u(t,x)},
\end{equation}
which is the equilibrium of $Q$, and is singular in $v$.

In this paper, we shall focus on the following two interaction operators, both of which have interesting biological applications. The first 
model is called \emph{aggregation system}, where the interaction operator is defined as
\begin{equation}\label{eq:agg}
\begin{split}
Q(f)(t,x,v) &=\left[\int_\Omega\int_{\R^d}\grad_xK(x-y)f(t,y,v^*)dv^*dy\right]\cdot\grad_vf(t,x,v)\\
&+\grad_v\cdot(vf(t,x,v)).
\end{split}
\end{equation}
The operator consists two parts. The first term describes pairwise attraction-repulsion interactions, where $K$ is the interaction
potential. A natural biological assumption is that the strength of the interaction depends on the distance between two agents: attraction 
in large distance and repulsion in short distance. Hence, $K=K(r)$ is radial, and it is decreasing when $r$ is small and increasing when 
$r$ is large. The second term represents relaxation in velocity. This term is less biologically motivated, but plays a crucial role in
deriving an interesting asymptotic limit \cite{bodnar2005derivation}. In fact, the mono-kinetic asymptotic solution \eqref{eq:asymp} is
rigorously derived in \cite{jabin2000macroscopic}, see also \cite{fetecau2015first}. Furthermore, $(\rho,u)$ satisfy
\begin{equation}
\partial_t\rho+\grad_x\cdot(\rho u)=0,\quad u(t,x)=-\int_{\Omega}\grad_xK(x-y)\rho(t,y)dy.
\label{ageq}
\end{equation}
This limiting system is realized as the \emph{aggregation equation} which appears in various contexts related to biological aggregation
models. The equation has been intensively studied in the recent decade, and we refer to \cite{mogilner1999non,topaz2006nonlocal} and
references therein.

The second model is called \emph{3-zone system}, where
\begin{equation}\label{eq:3zone}
\begin{split}
Q(f)(t,x,v) = &\left[\int_\Omega\int_{\R^d}\grad_xK(x-y)f(t,y,v^*)dv^*dy\right]\cdot\grad_vf(t,x,v)\\
+&\grad_v\cdot\left(\int_\Omega\int_{\R^d}\phi(|x-y|)(v-v^*)f(y,v^*)f(x,v)dv^*dy\right).
\end{split}
\end{equation}
The artificial relaxation term in \eqref{eq:agg} is replaced by an \emph{alignment} term, which models pairwise interactions in the middle
range. The alignment force, proposed by Cucker and Smale in \cite{cucker2007emergent}, describes the so called \emph{flocking phenomenon}
that agents align their velocities to the neighbors. Here, $\phi(x)$ is the \emph{influence function} which represents the strength of
alignment between two agents. It naturally depends on the distance
between the agents, and decreases when the distance becomes larger.
We also assume that $\phi$ is bounded and Lipschitz. Without loss of
generality, we take
\[\|\phi\|_{L^\infty}=\phi(0)=1.\]
 The 
kinetic representation of Cucker-Smale model is derived in \cite{ha2008particle}, analyzed in
\cite{carrillo2010asymptotic,tan2017discontinuous}, and numerically
studied in \cite{rey2016exact,tan2017discontinuous}.
We refer readers to \cite{do2017global,poyato2017euler} for discussions on Cucker-Smale dynamics with
singular influence function.

The interaction operator \eqref{eq:3zone} combines long-range attraction, short-range repulsion and mid-range alignment. Such 3-zone
interaction framework is proposed in \cite{reynolds1987flocks}. It has been very successful in biological and ecological modeling, and it is
widely used in computer animations. As $\eps\to0$, the asymptotic limit of \eqref{eq:general} with interaction \eqref{eq:3zone} is
rigorously derived in \cite{fetecau2016first}, where mono-kinetic asymptotes \eqref{eq:asymp} is justified, with $(\rho,u)$ satisfying
\begin{equation}
\label{eq:limitsystem}
\begin{aligned}
&\partial_t\rho+\grad_x\cdot(\rho u)=0,\\
&\int_\Omega\phi(|x-y|)\rho(t,y)(u(t,x)-u(t,y))dy=-\int_\Omega\grad_xK(x-y)\rho(t,y)dy.
\end{aligned}
\end{equation}
The wellposedness theory of the limiting system \eqref{eq:limitsystem} is also established in \cite{fetecau2016first}, with the additional
equality on momentum conservation. The system serves as a more biologically relevant substitute to the aggregation equation \eqref{ageq}.

The goal of this paper is to design a universal numerical scheme for (\ref{eq:general}) that solves the equation in both the kinetic regime 
when $\eps=\mathcal{O}(1)$, and the fluid regime when $\eps\to0$. This type of numerical schemes is called \emph{asymptotic preserving (AP)} 
and was originally introduced in \cite{jin1999efficient}. The commutative diagram on the left hand side of Figure \ref{fig:AP} illustrates 
the AP property. A scheme $f_\eps^h$ that approximates the solution $f_\eps$ with discretization parameters $h$ is AP if its stability 
requirement on $h$ is independent of $\eps$, and if its limit $f^h$ when $\eps$ tends to zero consistently serves as an approximation of the 
limiting solution $f$. Therefore, the scheme can be automatically applied to the limiting equation simply by setting $\eps\to0$. 

AP schemes have been very successful in solving kinetic equations with different types of hydrodynamic limits, see, e.g.,
\cite{jin2010asymptotic} for a recent review of AP schemes. In the conventional kinetic equations and corresponding AP schemes, the limiting 
profile is usually given by a smooth Maxwellian distribution. Hence one can use fixed grid points in velocity discretization with a cutoff.
The study of kinetic equations with non-Maxwellian equilibrium has received attentions recently. AP schemes have been designed for the
kinetic equations with heavy-tail equilibrium
\cite{crouseilles2016numerical,crouseilles2016numerical_II,wang2016asymptotic,wang2017asymptotic}.

The equilibrium of the alignment operator $Q$ for our system \eqref{eq:general}, on the contrary,  is given by a $\delta$-distribution in 
velocity space. As $\eps$ becomes small, the solution $f_\eps$ becomes more and more singular. This addresses a major challenge in designing 
AP schemes for \eqref{eq:general} as its direct discretization can not achieve high accuracy and stability for small $\eps$ due to the fact 
that the limit solution is singular.

To overcome the difficulty, we apply a family of transformations $\mathcal{T}_\eps$ to the original system \eqref{eq:general}. As illustrated in Figure 
\ref{fig:AP}, $f_\eps$ is mapped to a new function
$g_\eps=\mathcal{T}_\eps[f_\eps]$. The aim is to find appropriate transformations so that the limiting solution 
$g=\lim_{\eps\to0}g_\eps$ is not singular, and thus an AP scheme for $g_\eps$ can be designed without worrying about the singularity.

\tikzstyle{block} = [rectangle, draw, fill=blue!5, text width=2em, text centered, rounded corners, minimum height=2em]
\tikzstyle{line} = [draw, -latex']
\begin{figure}[ht!]
\begin{tikzpicture}[node distance=6em]

\node [block, text width=12em, minimum height=9em] (fgraph) {};

\node [left of=fgraph, xshift=2.3em, yshift=2.5em] (fepsdel) {\large$f_\eps^h$};
\node [right of=fepsdel] (feps) {\large$f_\eps$};
\node [below of=fepsdel, node distance=5em](fdel) {\large$f^h$};
\node [right of=fdel]  (f) {\large$f$};

\path [line] (fepsdel) -- node[above]{\small$h\to0$}(feps);
\path [line] (fepsdel) -- node[right, yshift=.3em]{\small$\eps\to0$}(fdel);
\path [line] (feps) -- node[right, yshift=.3em]{\small$\eps\to0$}(f);
\path [line] (fdel) -- node[above]{\small$h\to0$}(f);

\node [block, text width=12em, minimum height=9em, right of=fgraph,
node distance=20em] (ggraph) {};

\node [left of=ggraph, xshift=2.3em, yshift=2.5em] (gepsdel) {\large$g_\eps^h$};
\node [right of=gepsdel] (geps) {\large$g_\eps$};
\node [below of=gepsdel, node distance=5em] (gdel) {\large$g^h$};
\node [right of=gdel] (g) {\large$g$};

\path [line] (gepsdel) -- node[above]{\small$h\to0$}(geps);
\path [line] (gepsdel) -- node[right, yshift=.3em]{\small$\eps\to0$}(gdel);
\path [line] (geps) -- node[right, yshift=.3em]{\small$\eps\to0$}(g);
\path [line] (gdel) -- node[above]{\small$h\to0$}(g);

\path [line] ([yshift=.5em]fgraph.east) -- node[above]{$\mathcal{T_\eps}$}([yshift=.5em]ggraph.west);
\path [line] ([yshift=-.5em]ggraph.west)-- node[below]{$\mathcal{T}^{-1}_\eps$}([yshift=-.5em]fgraph.east);

\end{tikzpicture}
\caption{AP scheme under transformation}\label{fig:AP}
\end{figure}
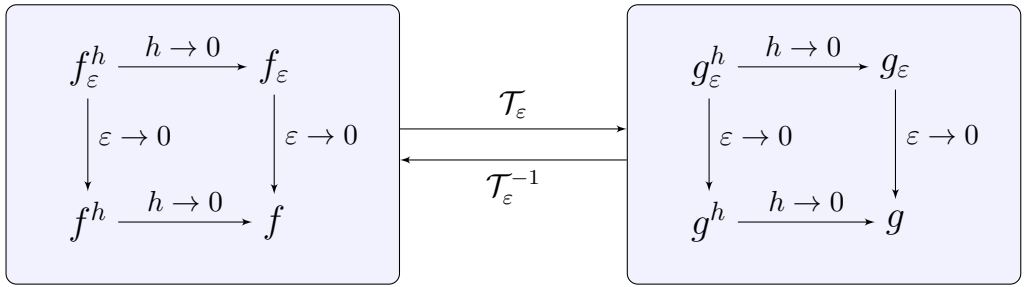

Since the singularity of $f$ is a $\delta$-distribution in velocity, a
natural choice of the transformation $\mathcal{T}_\eps$ is a scaling
in velocity. 
Velocity scaling methods have been used in various kinetic systems with singular time asymptotic limits, see, e.g., 
\cite{bobylev2000some,filbet2013rescaling,filbet2004rescaling,rey2016exact}. The heart of the matter in these methods is to find an 
appropriate \emph{scaling factor} $\omega_\eps$ to ensure that the rescaled function $g_\eps$ is not singular. In 
\cite{bobylev2000some,filbet2013rescaling,filbet2004rescaling}, the choice of $\omega_\eps$ is based on the self-similar behavior
of the spatial homogeneous equations, in which the transport part in \eqref{eq:general} is omitted. The scaling factor used in these works is 
proven to be optimal only for homogeneous systems with self-similar initial configurations. In \cite{rey2016exact}, a new scaling factor is
introduced for kinetic flocking systems and it is shown to be exact for spatially homogeneous systems with all smooth initial conditions.

In this paper, we present an AP scheme for \eqref{eq:general}
based on the velocity scaling method, where the transformation is
given by \eqref{eq:vs}, with a scaling factor similar 
to the one proposed in \cite{rey2016exact}. 
We study the asymptotic behavior of the rescaled function $g_\eps$, and provide sufficient conditions to ensure $g_\eps$ is non-singular uniformly in 
$\eps$. The result indicates that our choice of scaling factor captures the right scaling. Moreover, it implies that our numerical scheme is 
indeed asymptotic preserving.

The rest of the paper is organized as follows. In Section \ref{sec:vsm}, we describe the velocity scaling method, and show that with our 
choice of the scaling factor, the scaling is exact for the spatial ``homogeneous'' aggregation and 3-zone systems. In Section 
\ref{sec:asymp}, we discuss the asymptotic behavior of the full system \eqref{eq:general}, and prove that the rescaled profile is not 
singular under appropriate non-oscillatory conditions. In Section \ref{sec:AP}, we design AP schemes for the systems after velocity scaling, 
and discuss the AP property of \eqref{eq:general}. Finally, in Section \ref{sec:test}, we provide numerical experiments to illustrate the 
performance of the new scheme.

\section{Velocity scaling method}\label{sec:vsm}
In this section, we present the velocity scaling method for \eqref{eq:general}. We shall follow the storyline of \cite{rey2016exact} to 
derive a rescaled system.

\subsection{Exact rescaling on spatial ``homogeneous'' system}
As the main driving force of the system towards singularity is the interaction operator $Q$, we first consider the spatial ``homogeneous''
system
\begin{equation}\label{eq:flock}
\partial_tf_\eps=\frac{1}{\eps}Q(f_\eps),
\end{equation}
omitting the free transport part. We rescale the velocity variable by
\[\xi=\frac{v-u_\eps}{\omega_\eps},\]
and the transformation $\mathcal{T}_\eps$ is defined as
\begin{equation}\label{eq:vs}
g_\eps(t,x,\xi)=\mathcal{T}_\eps[f_\eps](t,x,\xi):=\omega_\eps^df_\eps(t,x,v)
=\omega_\eps^df_\eps(t,x,u_\eps+\omega_\eps\xi).
\end{equation}
Here, $\omega_\eps=\omega_\eps(t,x)$ is a \emph{scaling factor}, and $u_\eps=u_\eps(t,x)$ is the macroscopic velocity defined as
\begin{equation}\label{eq:u_eps}
u_\eps(t,x)=\frac{\int_{\R^d} vf_\eps(t,x,v)dv}{\int_{\R^d} f_\eps(t,x,v)dv}.
\end{equation}
The rescaled function $g_\eps$ has the following properties. First, the macroscopic density of $g_\eps$ is the same as the macroscopic
density of $f_\eps$,
\begin{equation}\label{eq:densitymatch}
\int_{\R^d} g_\eps(t,x,\xi)d\xi=\int_{\R^d} f_\eps(t,x,v)dv=:\rho_\eps(t,x).
\end{equation}
Second, with the shift by $u_\eps$, the first moment of $g_\eps$ in $\xi$ is zero for all $x$, namely the profile $g_\eps$ is nicely centered 
in $\xi$,
\begin{equation}\label{eq:zeromoment}
\int_{\R^d} g_\eps(t,x,\xi)\xi d\xi=0.
\end{equation}

An appropriate choice of scaling factor $\omega_\eps$ should produce a \emph{non-singular} rescaled function $g_\eps$, which neither 
concentrates nor spreads out as $\eps$ approaches zero, namely
\begin{equation}\label{eq:goodg}
\max_\xi|g_\eps(t,x,\xi)|\leq G,\quad\text{and}\quad\underset{\xi}{\text{supp}}g_\eps(t,x,\xi)\subset B_R(0),\quad
\forall x\in\Omega,\ \forall t\in[0,T],
\end{equation}
where $G$ and $R$ are finite and independent of $\eps$. Here, $B_R(0)$ denotes for a ball centered at origin and has radius $R$ in
$(\R^d,|\cdot|_\infty)$.

To choose an appropriate $\omega_\eps$, we represent the dynamics of $f_\eps$ by the triple $(g_\eps,u_\eps,\omega_\eps)$. 
The term $\partial_tf_\eps$ can then be expressed by
\begin{equation}\label{eq:dtf}
\begin{split}
\partial_tf_\eps=&-d\omega_\eps^{-d-1}\partial_t\omega_\eps g_\eps+\omega_\eps^{-d}\left[\partial_t g_\eps
-\grad_\xi g_\eps\cdot\frac{\omega_\eps\partial_tu_\eps+(v-u_\eps)\partial_t\omega_\eps}{\omega_\eps^2}\right]\\
 =&~\omega_\eps^{-d}\left[\partial_t g_\eps-\frac{\partial_t\omega_\eps}{\omega_\eps}\grad_\xi\cdot(\xi g_\eps)-\omega_\eps^{-1}\partial_tu_\eps\cdot\grad_\xi g_\eps\right],
\end{split}
\end{equation}
and the interaction kernel $Q$ is expressed as
\begin{equation}\label{eq:Qf}
Q(f_\eps)=\omega_\eps^{-d}\AAA_\eps\grad_\xi\cdot(\xi g_\eps)-\omega_\eps^{-d-1}\BBB_\eps\cdot\grad_\xi g_\eps.
\end{equation}
Here, $\AAA_\eps=\AAA_\eps(t,x)$ and $\BBB_\eps=\BBB_\eps(t,x)$ differ for different models. For aggregation system \eqref{ageq},
\begin{equation}\label{eq:aggpara}
\AAA_\eps(t,x)\equiv1, \quad \BBB_\eps(t,x)=-u_\eps(t,x)-\int_\Omega\grad_xK(x-y)\rho_\eps(t,y)dy.
\end{equation}
For 3-zone system \eqref{eq:limitsystem},
\begin{equation}\label{eq:3zonepara}
\begin{split}
\AAA_\eps(t,x)&=\int_\Omega\phi(|x-y|)\rho_\eps(t,y)dy,\\
\BBB_\eps(t,x)&=\int_\Omega\phi(|x-y|)\left[u_\eps(t,y)-u_\eps(t,x)\right]\rho_\eps(t,y)dy\\
&-\int_\Omega\grad_xK(x-y)\rho_\eps(t,y)dy.
\end{split}
\end{equation}

Combining \eqref{eq:dtf} and \eqref{eq:Qf}, we obtain an evolution equation for $g_\eps$
\begin{equation}\label{eq:homog}
\partial_tg_\eps=\left(\frac{\partial_t\omega_\eps}{\omega_\eps}+\frac{1}{\eps}\AAA_\eps\right)
\grad_\xi\cdot(\xi g_\eps)+\frac{1}{\omega_\eps}\left(\partial_tu_\eps-\frac{1}{\eps}\BBB_\eps\right)\cdot\grad_\xi g_\eps.
\end{equation}

To describe the dynamics of $\rho_\eps$ and $u_\eps$, we take zeroth and first moments of $f_\eps$ in \eqref{eq:flock}:
\begin{align*}
&\frac{\partial}{\partial t}\rho_\eps(t,x)=\frac{\partial}{\partial t}\int_{\R^d} f_\eps(t,x,v)dv=0,\\
&\frac{\partial}{\partial t}(\rho_\eps(t,x)u(t,x))=\frac{\partial}{\partial t}\int_{\R^d} f_\eps(t,x,v)vdv=\frac{1}{\eps}\rho_\eps\BBB_\eps,
\end{align*}
which in turn implies that 
\begin{equation}
\rho_\eps(t,x)=\rho^0(x)\quad\mbox{and}\quad\partial_tu_\eps-\frac{1}{\eps}\BBB_\eps=0.
\label{mom}
\end{equation}
Therefore, taking into account \eqref{mom} and defining the scaling factor $\omega_\eps$ in \eqref{eq:homog} as
\begin{equation}\label{eq:omegahomo}
\begin{cases}
\partial_t\omega_\eps=-\frac{1}{\eps}\omega_\eps\AAA_\eps\\
~\omega_\eps^0\equiv1\end{cases}
\quad\Rightarrow\quad
\omega_\eps(t,x)=\exp\left(-\frac{1}{\eps}\int\limits_0^t\AAA_\eps(s,x)ds\right),
\end{equation}
leads to $\partial_tg_\eps=0$, and thus $g_\eps$ remains unchanged in all time. In this case, we say that the rescaling is \emph{exact}
with factor $\omega_\eps$.

Since the initial profile $f^0(x)$ does not depend on $\eps$, it is easy to check that the initial triple $(g^0,u^0,\omega^0)$ is also
independent to $\eps$. Hence, the solution $g(t,x,\xi)=g^0(x,\xi)$ remains the same while $\eps$ varies and condition \eqref{eq:goodg} is
clearly satisfied as long as $g^0$ satisfies \eqref{eq:goodg}. Moreover, as the scaling is exact, we can easily reconstruct $f_\eps$ as
follows:
\[
f_\eps(t,x,v)=e^{dt\AAA_\eps(t,x)/\eps}f^0\left(x,e^{t\AAA_\eps(t,x)/\eps}(v-u_\eps(t,x))+u_\eps^0(x)\right).
\]
\begin{rem}
In the previous works \cite{filbet2004rescaling, filbet2013rescaling}, the scaling factor was chosen to be 
$\omega_\eps(t,x)=T_\eps(t,x)^{-1/2}$, where $T_\eps$ was the \emph{temperature} of the system, that is,
\[
T_\eps(t,x)=\frac{1}{d\rho_\eps(t,x)}\int|v-u_\eps(t,x)|^2f_\eps(t,x,v)dv.
\]
It was also shown that such scaling factor was exact for self-similar initial data. A new scaling factor proposed in \cite{rey2016exact}
takes advantage of the structure of the interaction operator and it is exact for all initial data.
\end{rem}

\subsection{Rescaling on the full system with free transport}
We now apply the scaling argument to the full system \eqref{eq:general}. The presence of free transport destroys the self-similar structure
of the spatial homogeneous system \eqref{eq:flock} and therefore it is in general impossible to find an exact scaling. We thus extend the
idea of the new scaling factor to find a non-singuar rescaled function, in the sense of \eqref{eq:goodg}.

The free transport term can be expressed in terms of $(g_\eps,u_\eps,\omega_\eps)$ as follows,
\[
\begin{aligned}
v\cdot\grad_xf_\eps&=(u_\eps+\omega_\eps\xi)\cdot\left[-d\omega_\eps^{-d-1}\grad_x\omega_\eps g_\eps\right.\\
&\left.+\omega_\eps^{-d}\left(\grad_xg_\eps-\frac{\grad_x\omega_\eps}{\omega_\eps}(\xi\cdot\grad_\xi)g_\eps
-\frac{1}{\omega_\eps}\sum\limits_{i=1}^d\partial_{\xi_i}g_\eps\grad_x(u_\eps)_i\right)\right].
\end{aligned}
\]
Adding this new contribution to \eqref{eq:homog}, yields
\begin{equation}
\begin{aligned}
\partial_tg_\eps&+(u_\eps+\omega_\eps\xi)\cdot\grad_xg_\eps\\
&=\left(\frac{\partial_t\omega_\eps}{\omega_\eps}+(u_\eps+\omega_\eps\xi)\cdot\frac{\grad_x\omega_\eps}{\omega_\eps}
+\frac{1}{\eps}\AAA_\eps\right)\grad_\xi\cdot(\xi g_\eps)\\
&+\frac{1}{\omega_\eps}\left(\partial_tu_\eps+(u_\eps+\omega_\eps\xi)\cdot\grad_xu_\eps
-\frac{1}{\eps}\BBB_\eps\right)\cdot\grad_\xi g_\eps.
\end{aligned}
\label{ge}
\end{equation}
Obtaining an exact scaling in this case would require finding a scaling factor $\omega_\eps$ that satisfies
\[
\frac{\partial_t\omega_\eps}{\omega_\eps}+(u_\eps+\omega_\eps\xi)\cdot\frac{\grad_x\omega_\eps}{\omega_\eps}+\frac{1}{\eps}\AAA_\eps=0.
\]
Since $\omega_\eps=\omega_\eps(t,x)$ is independent on the velocity variable $\xi$, such $\omega_\eps$ does not exist. Instead, we take 
$\omega_\eps$ which satisfies
\begin{equation}\label{eq:omega}
\partial_t\omega_\eps+u_\eps\cdot\grad_x\omega_\eps+\frac{1}{\eps}\omega_\eps\AAA_\eps=0.
\end{equation}
We again set $\omega^0(x)\equiv 1$, namely we do not perform scaling
at $t=0$.

By taking the first two moments moments of \eqref{eq:general}, we deduce the dynamics of macroscopic density $\rho_\eps$ and velocity 
$u_\eps$:
\begin{align}
&\partial_t\rho_\eps+\grad_x\cdot(\rho_\eps u_\eps)=0\label{eq:rho}\\
&\partial_t(\rho_\eps u_\eps)+\grad_x\cdot(\rho_\eps u_\eps\otimes u_\eps)+\grad_x\cdot(\omega_\eps^2P_\eps)
=\frac{1}{\eps}\rho_\eps\BBB_\eps,\label{eq:u}
\end{align}
where $P_\eps$ is the pressure tensor defined as
\begin{equation}
P_\eps(t,x):=\int_{\R^d}\xi\otimes\xi~g_\eps(t,x,\xi)d\xi.
\label{peps}
\end{equation}
Note that equation \eqref{eq:u} can be rewritten in the following non-conservative form:
\begin{equation}\label{eq:unon}
\partial_tu_\eps+u_\eps\cdot\grad_xu_\eps+\frac{1}{\rho_\eps}\grad_x\cdot
(\omega_\eps^2 P_\eps)=\frac{1}{\eps}\BBB_\eps,
\end{equation}
and the two forms are equivalent in the non-vacuum region where $\rho_\eps(x)>0$.

Taking into account  \eqref{eq:omega}, \eqref{eq:rho} and \eqref{eq:u}, equation \eqref{ge} can be rewritten as 
\begin{equation}\label{eq:g}
\begin{aligned}
\partial_tg_\eps&+(u_\eps+\omega_\eps\xi)\cdot\grad_xg_\eps\\
&=\left(\xi\cdot\grad_x\omega_\eps\right)
\grad_\xi\cdot(\xi g_\eps)+\left((\xi\cdot\grad_x)u_\eps\right)\cdot\grad_\xi g_\eps
-\frac{1}{\rho_\eps\omega_\eps}\left(\grad_x\cdot(\omega_\eps^2P_\eps)\right)
\cdot\grad_\xi g_\eps,
\end{aligned}
\end{equation}
or in the following equivalent conservative form:
\[
\partial_tg_\eps+\grad_x\cdot\left((u_\eps+\omega_\eps\xi)g_\eps\right) =
\grad_\xi\cdot\left[\left((\xi\cdot\grad_x\omega_\eps)\xi+(\xi\cdot\grad_x)u_\eps-
\frac{1}{\rho_\eps\omega_\eps}\left(\grad_x\cdot(\omega_\eps^2P_\eps)\right)\right)g_\eps\right].
\]

Unlike the spatial ``homogeneous'' system \eqref{eq:flock}, the rescaled function $g_\eps$ does change in time now, and it varies with 
different $\eps$. To validate our choice of scaling factor for the full system, it is important to check that $g_\eps$ satisfies \eqref{eq:goodg} uniformly in $\eps$, particularly when $\eps$ approaches zero.

\section{Asymptotic behavior}\label{sec:asymp}
This section is devoted to studying the asymptotic behavior of equations \eqref{eq:omega}--\eqref{eq:g}, as $\eps\to0$. The goal is to 
understand whether $g_\eps$ is non-singular under the proposed rescaling when $\eps$ is small. The result also supports the AP property of
the numerical scheme that will be discussed in Section \ref{sec:AP} below.

We denote 
\begin{equation}
G_\eps(t):=\max_{x,\xi}|g_\eps(t,x,\xi)|,
\label{ge1}
\end{equation}
and $R_\eps(t)$ be the smallest number such that
\begin{equation}
{\text{supp}}_\xi g_\eps(t,x,\xi)\subset B_{R_\eps(t)}(0).
\label{re}
\end{equation}
We also recall that $g_\eps$ is non-singular if condition \eqref{eq:goodg} is satisfied and hence we shall show that $G_\eps(t)$ and 
$R_\eps(t)$ are bounded independent of $\eps$, for all $t\in[0,T]$, under appropriate assumptions.


\subsection{Non-oscillatory assumptions}\label{sec:nonosc}
We start our discussion with two assumptions on the solution triple $(g_\eps, u_\eps, \omega_\eps)$. The first one is a spatially
non-oscillatory assumption on the rescaled function $g_\eps$,
\begin{equation}\label{eq:nonosc}
|\grad_xg_\eps(t,x,\xi)|\leq C_1 g_\eps(t,x,\xi),
\end{equation}
for all $t\in[0,T]$, $x\in\Omega$ and $\xi\in\R^d$, where the constant
$C_1$ is uniform in $\eps$. Condition \eqref{eq:nonosc} implies 
non-oscillatory bounds on macroscopic quantities. Indeed, for density $\rho_\eps$, we have
\begin{equation}\label{eq:nonoscrho}
|\grad_x\rho_\eps(t,x)|=\left|\int \grad_xg_\eps(t,x,\xi) d\xi\right|\leq C_1\rho_\eps(t,x).
\end{equation}
For pressure $P_\eps$, we have the following estimate
\begin{equation}\label{eq:pressureest}
|P_\eps(t,x)| = \left|\int \xi\otimes\xi g_\eps(t,x,\xi) d\xi\right|\leq
R_\eps^2(t)\rho_\eps(t,x),
\end{equation}
and condition \eqref{eq:nonosc} implies
\begin{equation}\label{eq:nonoscp}
|\grad_x P_\eps(t,x)| =\left|\int \xi\otimes\xi \grad_xg_\eps(t,x,\xi)
  d\xi\right|\leq C_1R_\eps^2(t)\rho_\eps(t,x).
\end{equation}
If condition \eqref{eq:nonosc} is violated, then $g_\eps$ becomes more oscillatory when $\eps$ gets smaller, in which case one can not 
expect to design AP numerical scheme for $g_\eps$. 

The second assumption is the Lipchitz apriori bound on the macroscopic velocity $u_\eps$,
\begin{equation}\label{eq:gradubound}
\|\grad_xu_\eps(t,\cdot)\|_{L^\infty(\Omega)}\leq C_2<\infty,
\end{equation}
for all $t\in[0,T]$, where the constant $C_2$ is uniform in $\eps$. 

It should be observed that taking $\eps\to 0$ in \eqref{eq:rho} and \eqref{eq:unon}, one can formally obtain the limiting system, for 
which condition \eqref{eq:gradubound} is also satisfied. The argument has been rigorously proved in \cite{jabin2000macroscopic} for the
aggregation system \eqref{ageq} and in \cite{fetecau2016first} for the 3-zone system \eqref{eq:limitsystem}. For both systems, the limiting
velocity $u$ is Lipschitz globally in time, under suitable regularity assumptions on kernels $K$ and $\phi$, and thus satisfies 
\eqref{eq:gradubound}. The regularity for the limiting system does not imply, however, that \eqref{eq:gradubound} holds uniformly in $\eps$. 
In fact, the convergence of $f_\eps$ to $\rho\delta_{v=u}$ is only weak-$\ast$ in measure. This does not rule out the possibility of 
oscillation in $x$ as $\eps\to0$.

In \cite{tadmor2014critical,carrillo2016critical}, it has also been proven that condition \eqref{eq:gradubound} is satisfied when the system 
\eqref{eq:rho}, \eqref{eq:unon} is considered in the pressureless regime and does not depend on $g_\eps$, i.e.,
\begin{equation}\label{eq:pressureless}
\partial_t\rho_\eps+\grad_x\cdot(\rho_\eps u_\eps)=0,\quad
\partial_tu_\eps+u_\eps\cdot\grad_xu_\eps=\frac{1}{\eps}\BBB_\eps,
\end{equation}
and subject to subcritical initial data. Moreover, the subcritical region becomes larger when $\eps$ gets smaller. Therefore,
\eqref{eq:gradubound} is satisfied uniformly for $\eps\in[0,\eps_0]$ if the initial profile $u^0$ lies in the subcritical region of the 
system \eqref{eq:pressureless} with $\eps=\eps_0$. \smallskip

The result for pressureless system \eqref{eq:pressureless} can be easily extended to the general dynamics \eqref{eq:unon} when the pressure
term is Lipschitz bounded uniformly in $\eps$. Indeed, from \eqref{eq:pressureest} and \eqref{eq:nonoscp}, we know $P_\eps/\rho_\eps$
and $\grad_x P_\eps/\rho_\eps$ are uniformly bounded by $R_\eps^2$. This together with the estimate on $\omega_\eps$ (see Section 
\ref{sec:omega}) implies boundedness of the pressure term in \eqref{eq:unon}. The Lipschitz bound can also be obtained by the additional 
non-oscillatory assumption \[|\grad_x^{\otimes2}g_\eps(t,x,\xi)|\lesssim g_\eps(t,x,\xi).\]
We omit the proof and redirect the reader to \cite{tadmor2014critical,carrillo2016critical} for relavant discussions.

We have thus argued that condition \eqref{eq:gradubound} holds under appropriate setup if $g_\eps$ is non-oscillatory in $x$. Therefore, 
both \eqref{eq:nonosc} and \eqref{eq:gradubound} are considered as spatially non-oscillatory assumptions on $g_\eps$. Given these two 
assumptions, we are going to prove that $g_\eps$ is non-singular, uniformly in $\eps$.

\subsection{The scaling factor} \label{sec:omega} In this section, we use the evolution equation \eqref{eq:omega} to estimate both the 
scaling factor $\omega_\eps$ and its gradient $\grad_x\omega_\eps$. To this end, we begin with the following proposition.
\begin{prop}\label{prop:omega}
Assume $\AAA_\eps$ is bounded below by a positive constant $c$ that is
independent of $\eps$, then
$\|\omega_\eps(t,\cdot)\|_{L^\infty(\Omega)}$ tends to 0 as $\eps\to0$.
\end{prop}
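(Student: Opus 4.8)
The plan is to solve the transport equation \eqref{eq:omega} by the method of characteristics, reducing it along the flow of $u_\eps$ to an explicitly solvable linear ODE from which the decay is transparent. Fix $\eps>0$, a time $t\in(0,T]$, and a point $x\in\Omega$, and define the backward characteristic $X(s)=X(s;t,x)$ as the solution of
\[
\frac{d}{ds}X(s)=u_\eps(s,X(s)),\qquad X(t)=x.
\]
Under the uniform Lipschitz bound \eqref{eq:gradubound} on $u_\eps$, the Cauchy--Lipschitz theorem guarantees a unique solution on $[0,t]$, so the characteristic is well defined and the flow map is a bijection of $\Omega$; this well-posedness is the structural input that makes the argument run, and it is uniform in $\eps$ because $C_2$ is.

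Next I would differentiate $\omega_\eps$ along the characteristic. By the chain rule and \eqref{eq:omega},
\[
\frac{d}{ds}\,\omega_\eps(s,X(s))=\big(\partial_t\omega_\eps+u_\eps\cdot\grad_x\omega_\eps\big)(s,X(s))=-\frac{1}{\eps}\,\omega_\eps(s,X(s))\,\AAA_\eps(s,X(s)).
\]
This is a scalar linear ODE in $s$, which integrates, using the normalization $\omega^0\equiv1$, to the closed form
\[
\omega_\eps(t,x)=\exp\left(-\frac{1}{\eps}\int_0^t\AAA_\eps(s,X(s))\,ds\right).
\]
In particular $\omega_\eps$ stays strictly positive, a fact that will also be useful for the gradient estimates later in this section.

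Finally I would invoke the hypothesis $\AAA_\eps\geq c>0$ to bound the integrand pointwise from below, so that $\int_0^t\AAA_\eps(s,X(s))\,ds\geq ct$ and hence $\omega_\eps(t,x)\leq e^{-ct/\eps}$. Since this estimate does not depend on $x$, it gives $\|\omega_\eps(t,\cdot)\|_{L^\infty(\Omega)}\leq e^{-ct/\eps}$, which tends to $0$ as $\eps\to0$ for every fixed $t>0$, as claimed.

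The main subtlety to address is that the characteristics themselves depend on $\eps$ through $u_\eps$, so one must confirm that the decay rate $c/\eps$ is not spoiled by the $\eps$-dependence of the trajectories. This is exactly where the two uniform hypotheses are essential: the exponent's lower bound $ct/\eps$ uses only the pointwise bound $\AAA_\eps(s,X(s))\geq c$ along whatever trajectory arises, and the flow is uniformly well posed regardless of which $\eps$ is chosen, by \eqref{eq:gradubound}. I expect this bookkeeping---verifying that nothing in the estimate degenerates as $\eps\to0$---to be the only delicate point; the computation itself is routine. (An alternative route, avoiding explicit characteristics, is a maximum-principle argument: at a spatial maximum of $\omega_\eps$ one has $\grad_x\omega_\eps=0$, so $\partial_t\omega_\eps\leq-\tfrac{c}{\eps}\omega_\eps$ and Gronwall yields the same bound; I would keep this in reserve in case the flow-map regularity proves awkward to state cleanly on $\Omega=\R^d$.)
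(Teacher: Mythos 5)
Your proof is correct and follows essentially the same route as the paper: integrate the linear ODE satisfied by $\omega_\eps$ along the characteristics of $u_\eps$ and use the lower bound $\AAA_\eps\geq c$ to obtain $\|\omega_\eps(t,\cdot)\|_{L^\infty(\Omega)}\leq e^{-ct/\eps}$. The only difference is cosmetic---you parametrize characteristics backward from $(t,x)$ while the paper flows forward from time $0$ and collects all paths---so the two arguments are the same in substance.
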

\begin{proof}
Consider a flow map $X_\eps(t,x)$ such that
\begin{equation}
\partial_tX_\eps(t,x)=u_\eps(t,X_\eps(t,x)),\quad X_\eps(0,x)=x.
\label{flowmap}
\end{equation}
Along each characteristic path, we have
\[\frac{d}{dt}\omega_\eps(t,X_\eps(t,x))=-\frac{1}{\eps}(\omega_\eps\AAA_\eps)(t,X_\eps(t,x)),\]
which in turns yields
\[\omega_\eps(t,X_\eps(t,x))=\omega^0(x)\exp\left(-\frac{1}{\eps}\int\limits_0^t\AAA_\eps(s,X_\eps(s,x)))ds\right)
\leq\exp\left(-\frac{c}{\eps}t\right).\]
Collecting all paths, we obtain
\begin{equation}\label{eq:omegalimit}
\|\omega_\eps(t,\cdot)\|_{L^\infty(\Omega)}\leq\exp\left(-\frac{c}{\eps}t\right),
\end{equation}
which vanishes as $\eps\to0$.
\end{proof}
\begin{rem}
Since $\lim_{\eps\to0}f_\eps$ is singular, a correct rescaling has to have a factor $\omega_\eps$ vanishes as $\eps\to0$. This is true for 
our choice of $\omega_\eps$.
\end{rem}
\begin{rem}\label{rem:Alow}
Under appropriate settings, the lower bound assumption on $\AAA_\eps$ is valid for both the aggregation system \eqref{eq:aggpara} and 
3-zone system \eqref{eq:3zonepara}. Indeed, for the aggregation system, $\AAA_\eps\equiv1$, while for the 3-zone system, $\AAA_\eps$ can be
estimated by
\[
\AAA_\eps(t,x)=\int_{\Omega}\phi(|x-y|)\rho_\eps(t,y)dy\geq\phi_{\min}\|\rho_\eps(t,\cdot)\|_{L^1(\Omega)}=\phi_{\min},
\]
provided $\phi$ is lower bounded by $\phi_{\min}>0$.
Note that $\|\rho_\eps(t,\cdot)\|_{L^1(\Omega)}=1$ due to mass
conservation, and the fact that $f_\eps$ is a probability distribution.
 The assumption on $\phi$ can be further relaxed (see e.g. \cite{tadmor2014critical}). 
We omit the details.
\end{rem}

Next, we provide a bound on $\grad_x\omega_\eps$, which is only needed for the 3-zone system \eqref{eq:3zonepara}, as the quantity is identically zero in aggregation system \eqref{eq:aggpara}.
\begin{prop}\label{prop:gradomega}
For the 3-zone system \eqref{eq:3zonepara}, we have $\|\grad_x\AAA_\eps\|_{L^\infty(0,T;\Omega)}\leq
C_1$. Moreover,
$\|\grad_x\omega_\eps\|_{L^\infty(0,T,\Omega)}$ tends to 0 as $\eps\to0$.
\end{prop}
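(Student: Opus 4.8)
The plan is to handle the two assertions in turn, since the gradient bound on $\AAA_\eps$ is exactly what controls the forcing in the evolution of $\grad_x\omega_\eps$. \emph{Bound on $\grad_x\AAA_\eps$.} First I would differentiate the formula $\AAA_\eps(t,x)=\int_\Omega\phi(|x-y|)\rho_\eps(t,y)\,dy$ from \eqref{eq:3zonepara}. Using $\grad_x\phi(|x-y|)=-\grad_y\phi(|x-y|)$ and integrating by parts on $\Omega$ (there are no boundary terms since $\Omega=\R^d$ or $\T^d$, and $\phi$ is Lipschitz so this is legitimate), the derivative falls on the density, $\grad_x\AAA_\eps(t,x)=\int_\Omega\phi(|x-y|)\grad_y\rho_\eps(t,y)\,dy$. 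The non-oscillatory estimate \eqref{eq:nonoscrho}, $|\grad_x\rho_\eps|\le C_1\rho_\eps$, together with $\phi\ge0$, then yields the pointwise bound $|\grad_x\AAA_\eps(t,x)|\le C_1\AAA_\eps(t,x)$. Since $\|\phi\|_{L^\infty}=1$ and $\|\rho_\eps(t,\cdot)\|_{L^1}=1$ give $\AAA_\eps\le1$, this proves $\|\grad_x\AAA_\eps\|_{L^\infty(0,T;\Omega)}\le C_1$; I would retain the sharper form $|\grad_x\AAA_\eps|\le C_1\AAA_\eps$ for the next step.

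\emph{Decay of $\grad_x\omega_\eps$.} Next I would differentiate the transport equation \eqref{eq:omega} in $x$. Writing $w_\eps:=\grad_x\omega_\eps$ and applying the product rule to $u_\eps\cdot\grad_x\omega_\eps$ and to $\tfrac1\eps\omega_\eps\AAA_\eps$ gives a closed equation
\[
\partial_t w_\eps+(u_\eps\cdot\grad_x)w_\eps+(\grad_x u_\eps)\,w_\eps+\tfrac1\eps\AAA_\eps\,w_\eps=-\tfrac1\eps\omega_\eps\,\grad_x\AAA_\eps .
\]
Following the proof of Proposition \ref{prop:omega}, I would read this along the characteristics \eqref{flowmap}, where the convective term disappears, and estimate $|w_\eps|$ (or $|w_\eps|^2$, to avoid differentiating the norm). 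The matrix term obeys $|(\grad_x u_\eps)w_\eps|\le C_2|w_\eps|$ by \eqref{eq:gradubound}, the term $\tfrac1\eps\AAA_\eps|w_\eps|$ is a stiff damping, and the forcing is bounded by $\tfrac1\eps\omega_\eps|\grad_x\AAA_\eps|\le\tfrac{C_1}\eps\omega_\eps$ from the first part. Multiplying by the integrating factor $e^{-C_2t}\exp\!\big(\tfrac1\eps\int_0^t\AAA_\eps\,ds\big)$ removes the stiff damping; crucially this factor is exactly $1/\omega_\eps$ along the flow (compare \eqref{eq:omegahomo}), so it cancels the $\omega_\eps$ sitting in the forcing and leaves the non-stiff right-hand side $\tfrac{C_1}\eps e^{-C_2t}$. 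With the zero initial data $w_\eps(0,\cdot)=\grad_x\omega^0=0$, Grönwall then produces a bound of the form $|\grad_x\omega_\eps(t,x)|\le\tfrac{C_1}{C_2\eps}\big(e^{C_2t}-1\big)e^{-ct/\eps}$, using $\AAA_\eps\ge c$ and \eqref{eq:omegalimit}. For each fixed $t>0$ the factor $e^{-ct/\eps}$ dominates the $1/\eps$, so this tends to $0$ as $\eps\to0$, exactly as $\omega_\eps$ does in Proposition \ref{prop:omega}.

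I expect the main obstacle to be the competition between the two $\mathcal{O}(1/\eps)$ effects: a naive bound of the forcing by $C_1/\eps$ would blow up, and only the presence of the decaying factor $\omega_\eps=\exp(-\tfrac1\eps\int_0^t\AAA_\eps)$ in the forcing, which is precisely the reciprocal of the integrating factor that kills the damping, makes the estimate close. A secondary point is bookkeeping: $w_\eps$ is vector valued and the coefficient $\grad_x u_\eps$ is a matrix, so I would work with $\tfrac{d}{dt}|w_\eps|$ and the operator-norm bound \eqref{eq:gradubound} rather than componentwise, and I would note that the decay is genuinely pointwise in $t$, degenerating near $t=0$ exactly as the bound $e^{-ct/\eps}$ for $\omega_\eps$ itself does in Proposition \ref{prop:omega}.
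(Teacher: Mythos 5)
Your proof follows the paper's argument essentially step for step: the bound on $\grad_x\AAA_\eps$ comes from moving the derivative onto $\rho_\eps$ and applying \eqref{eq:nonoscrho} together with $\|\phi\|_{L^\infty}\|\rho_\eps(t,\cdot)\|_{L^1}=1$, while the decay of $\grad_x\omega_\eps$ comes from differentiating \eqref{eq:omega}, reading the result along the characteristics \eqref{flowmap}, and closing a Gr\"onwall estimate with damping $\AAA_\eps/\eps\geq c/\eps$ and forcing $\leq (C_1/\eps)\,\omega_\eps\leq (C_1/\eps)e^{-cs/\eps}$; your integrating-factor bookkeeping is just the Duhamel formula the paper writes, and you arrive at the identical bound $\tfrac{C_1}{C_2\eps}\bigl(e^{C_2t}-1\bigr)e^{-ct/\eps}$. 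Your closing observation that the resulting decay is genuinely pointwise in $t$ (degenerating near $t=0$) also matches how the paper itself concludes, so the proposal is correct and essentially the same as the paper's proof.
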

\begin{proof}
We start with the estimate on $\grad_x\AAA_\eps$ and obtain from \eqref{eq:3zonepara}:
\begin{align*}
|\grad_x\AAA_\eps(t,x)|=&\left|\int\phi(|x-y|)\grad_x\rho_\eps(t,y)dy\right|\\
\leq& C_1 \int\phi(|x-y|)\rho_\eps(t,y)dy\leq
C_1\|\phi\|_{L^\infty}\|\rho_\eps(t,\cdot)\|_{L^1}=C_1,
\end{align*}
where the first inequality is due to non-oscillatory condition
\eqref{eq:nonoscrho}. Here, we recall our assumption that $\phi$ is
bounded and $\|\phi\|_{L^\infty}=1$.

We now estimate $\grad_x\omega_\eps$ by applying operator $\grad_x$ to equation
\eqref{eq:omega}. Once again we consider the flow map \eqref{flowmap} and obtain the following equation along each characteristic path:
\[\frac{d}{dt}\grad_x\omega_\eps(t,X_\eps(t,x))=-\frac{1}{\eps}\grad_x(\omega_\eps\AAA_\eps)
-\sum\limits_{j=1}^d\partial_{x_j}\omega_\eps\grad_x(u_\eps)_j.\]
Denote $M_\eps(t,x):=|\grad_x\omega_\eps(t,x)|_\infty$, where $|\cdot|_\infty$
is the infinity norm in $\R^d$. Then,
\[\frac{d}{dt}M_\eps(t,X_\eps(t,x))\leq\left(-\frac{c}{\eps}+|\grad_xu_\eps(t,X_\eps(t,x))|_\infty\right)M_\eps
+\frac{C_1}{\eps}\omega_\eps(t,X_\eps(t,x)).\]
This implies
\begin{align*}
M_\eps(t,X_\eps(t,x))\leq&
M(0,x)\exp\left[\int\limits_0^t\left(-\frac{c}{\eps}+|\grad_xu_\eps(x,X_\eps(s,x)|_\infty\right)ds\right]\\
+&\frac{C_1}{\eps}\int_0^t\omega_\eps(s,X_\eps(s,x))\exp\left[\int\limits_s^t\left(-\frac{c}{\eps}+|\grad_xu_\eps(\tau,X_\eps(\tau,x)|_\infty\right)d\tau\right]
   ds.
\end{align*}
As $\omega^0(x)\equiv 1$, we obtain $M(0,x)\equiv 0$.

Given any $t\in[0,T]$, we combine all paths and use \eqref{eq:gradubound}
\eqref{eq:omegalimit}, to obtain
\begin{align*}
\|M_\eps(t,\cdot)\|_{L^\infty}\leq&~
\frac{C_1}{\eps}\int\limits_0^t\exp\left(-\frac{c}{\eps}s\right)
\exp\left[-\frac{c(t-s)}{\eps}+C_2(t-s)\right]ds\\=&~
\frac{C_1
  (e^{C_2t}-1)}{C_2\eps}\exp\left(-\frac{c}{\eps}t\right),
\end{align*}
which also vanishes as $\eps\to0$.
\end{proof}

\subsection{The rescaled function}
We now investigate regularity properties of function $g_\eps$ in the sense of \eqref{eq:goodg}.

\begin{prop}\label{prop:gbound}
Consider functions $G_\eps(t)$ and $R_\eps(t)$ defined in \eqref{ge1} and \eqref{re}, respectively. Suppose $R_\eps(t)$ is uniformly 
bounded in $\eps$, for $t\in[0,T]$. Then, $G_\eps(t)$ is also uniformly bounded in $\eps$ for $t\in[0,T]$.
\end{prop}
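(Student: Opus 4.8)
The plan is to view \eqref{eq:g} as a linear transport equation in the phase variables $(x,\xi)$ carrying a single zeroth-order multiplicative source, and to integrate it along characteristics. Expanding $\grad_\xi\cdot(\xi g_\eps)=d\,g_\eps+\xi\cdot\grad_\xi g_\eps$ in the first term on the right of \eqref{eq:g}, I would recast the equation as
\[
\partial_t g_\eps+(u_\eps+\omega_\eps\xi)\cdot\grad_x g_\eps-\mathbf{D}_\eps\cdot\grad_\xi g_\eps=d\,(\xi\cdot\grad_x\omega_\eps)\,g_\eps,
\]
where $\mathbf{D}_\eps:=(\xi\cdot\grad_x\omega_\eps)\xi+(\xi\cdot\grad_x)u_\eps-\frac{1}{\rho_\eps\omega_\eps}\grad_x\cdot(\omega_\eps^2P_\eps)$ gathers the remaining first-order-in-$\xi$ terms. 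The key structural point is that the only undifferentiated coefficient is $d\,(\xi\cdot\grad_x\omega_\eps)$: all other terms are genuine transport and merely relocate $g_\eps$ in phase space without amplifying its size.

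I would then introduce the characteristic flow $(X_\eps(t),\Xi_\eps(t))$ determined by $\dot X_\eps=u_\eps+\omega_\eps\Xi_\eps$ and $\dot\Xi_\eps=-\mathbf{D}_\eps$, along which the equation reduces to the scalar linear ODE
\[
\frac{d}{dt}\,g_\eps\big(t,X_\eps(t),\Xi_\eps(t)\big)=d\,\big(\Xi_\eps(t)\cdot\grad_x\omega_\eps(t,X_\eps(t))\big)\,g_\eps\big(t,X_\eps(t),\Xi_\eps(t)\big),
\]
whose solution is $g^0(X_\eps(0),\Xi_\eps(0))$ times $\exp\big(d\int_0^t\Xi_\eps\cdot\grad_x\omega_\eps\,ds\big)$. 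Because the value along a trajectory is a nonvanishing multiple of its terminal value, a characteristic reaching a point with $g_\eps(t,x,\xi)\neq0$ remains inside the $\xi$-support at all earlier times; invoking the standing hypothesis that $R_\eps$ is uniformly bounded, this gives $|\Xi_\eps(s)|\le R_\eps(s)\le R$ for $s\in[0,t]$, with $R$ independent of $\eps$.

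It then remains to bound the exponent uniformly. Using the support control and Proposition \ref{prop:gradomega}, I would estimate
\[
\left|d\int_0^t\Xi_\eps(s)\cdot\grad_x\omega_\eps\big(s,X_\eps(s)\big)\,ds\right|\le d\,R\,T\,\|\grad_x\omega_\eps\|_{L^\infty(0,T;\Omega)}\le d\,R\,T\,W,
\]
where the explicit estimate from the proof of Proposition \ref{prop:gradomega}, namely $\frac{C_1(e^{C_2t}-1)}{C_2\eps}e^{-ct/\eps}$, is bounded by a constant $W$ independent of $\eps$ (use $e^{C_2t}-1\le C_2t\,e^{C_2T}$ together with $\sup_{t\ge0}\frac{t}{\eps}e^{-ct/\eps}=\frac{1}{ec}$; for the aggregation system $\grad_x\omega_\eps\equiv0$ and the exponent vanishes outright). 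Taking the maximum over $(x,\xi)$ yields $G_\eps(t)\le e^{d\,R\,W\,T}\max_{x,\xi}|g^0(x,\xi)|$, and since $g^0$ does not depend on $\eps$ this is the desired uniform bound. The main obstacle is not this computation but the well-posedness of the characteristic system on $[0,T]$: one must verify that $\mathbf{D}_\eps$ stays bounded and regular enough on the support, which follows from the Lipschitz bound \eqref{eq:gradubound} on $u_\eps$, Propositions \ref{prop:omega}--\ref{prop:gradomega} for $\omega_\eps$ and $\grad_x\omega_\eps$, and the non-oscillatory pressure estimates for the $\frac{1}{\rho_\eps\omega_\eps}\grad_x\cdot(\omega_\eps^2P_\eps)$ term --- though notably the size of $\mathbf{D}_\eps$ never enters the final bound, only $R$ and the source coefficient do.
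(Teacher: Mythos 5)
Your proposal is correct and follows essentially the same route as the paper's proof: integrate \eqref{eq:g} along phase-space characteristics after extracting the zeroth-order source $d\,(\xi\cdot\grad_x\omega_\eps)g_\eps$, solve the resulting linear ODE, and bound the exponent by $R_\eps(s)\,\|\grad_x\omega_\eps(s,\cdot)\|_{L^\infty(\Omega)}$ using Proposition \ref{prop:gradomega} together with the hypothesis on $R_\eps$. Your only departures are cosmetic improvements: you use the sign $\dot\Xi_\eps=-\mathbf{D}_\eps$ that is actually consistent with the claimed ODE along characteristics (the paper's \eqref{a123} carries a sign slip), and you spell out both the support-propagation step and the uniform-in-$\eps$ bound $W$ on $\grad_x\omega_\eps$ that the paper leaves implicit.
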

\begin{proof}
Consider a flow map $(X_\eps(t,x,\xi), \Xi_\eps(t,x,\xi))$ in $(x,\xi)$ plane, where
\begin{equation}
\begin{aligned}
&\partial_tX_\eps(t,x,\xi) = u_\eps(t,X_\eps)+\omega_\eps(t,X_\eps)\Xi_\eps,\\
&\partial_t\Xi_\eps(t,x,\xi) = (\Xi_\eps\cdot\grad_x\omega_\eps(t,X_\eps))\Xi_\eps +
 (\Xi_\eps\cdot\grad_x)u_\eps(t,X_\eps)\\
 &\hspace*{2cm}-\frac{1}{\rho_\eps(t,X_\eps)\omega_\eps(t,X_\eps)}\grad_x\cdot(\omega_\eps^2(t,X_\eps)P_\eps(t,x)),\\
&X_\eps(0,x,\xi)=x,\quad \Xi_\eps(0,x,\xi)=\xi.
\end{aligned}
\label{a123}
\end{equation}
From \eqref{eq:g}, along each characteristic path we have
\[\frac{d}{dt}g_\eps(t,X_\eps(t,x,\xi),\Xi_\eps(t,x,\xi))=d~(\Xi_\eps\cdot\grad_x\omega_\eps(t,X_\eps))g_\eps(t,X_\eps,\Xi_\eps)).\]
Therefore, if $(x,\xi)\not\in supp(g^0)$, then $g_\eps(t,X_\eps,\Xi_\eps)=0$. If
$(x,\xi)\in supp(g^0)$, then
\begin{align*}
g_\eps(t,X_\eps,\Xi_\eps)=&g^0(x,\xi)\exp
\left[d\int\limits_0^t\Xi_\eps(s,x,\xi)\cdot\grad_x\omega_\eps(t,X_\eps(s,x))ds\right]\\
\leq&G(0)\exp\left[d\int\limits_0^tR_\eps(s)\|\grad_x\omega_\eps(s,\cdot)\|_{L^\infty(\Omega)}ds\right].
\end{align*}
Taking the supreme on all $(X_\eps,\Xi_\eps)$, yields
\[G_\eps(t)\leq G(0)\exp\left[d\int\limits_0^tR_\eps(s)\|\grad_x\omega_\eps(s,\cdot)\|_{L^\infty(\Omega)}ds\right].\]
Note that $G(0)$ does not depend on $\eps$ and therefore, from Proposition \ref{prop:gradomega} and the assumption that $R_\eps(t)$ is bounded uniformly in $\eps$, it follows that $G_\eps(t)$ is also uniformly bounded in $\eps$.
\end{proof}
\begin{rem}
It follows from Proposition \ref{prop:gbound} that if the rescaled function $g_\eps$ does not spread out, it will not concentrate either. In 
particular, for the aggregation system \eqref{eq:aggpara} there is no concentration regardless of the size of the support of $g_\eps$ since
$\grad_x\omega_\eps\equiv0$ in this case.
\end{rem}
We are left to prove that $g_\eps$ does not spread out. The growth of the support of $g_\eps$ is equivalent to the spread of the 
characteristic paths in \eqref{a123}, whose dynamics implies the following estimate on $R_\eps(t)$:
\begin{align*}
\frac{d}{dt}R_\eps(t)&\leq\|\grad_x\omega_\eps(t,\cdot)\|_{L^\infty(\Omega)}R_\eps(t)^2
+\|\grad_xu_\eps(t,\cdot)\|_{L^\infty(\Omega)}R_\eps(t)\\
&+2\|\grad_x\omega_\eps(t,\cdot)\|_{L^\infty(\Omega)}
\max_{x\in\Omega}\frac{|P_\eps(t,x)|_\infty}{\rho_\eps(t,x)}+\|\omega_\eps(t,\cdot)\|_{L^\infty(\Omega)}
\max_{x\in\Omega}\frac{|\grad_xP_\eps(t,x)|_\infty}{\rho_\eps(t,x)}.
\end{align*}
From the non-oscillatory bounds \eqref{eq:pressureest}--\eqref{eq:gradubound}, we obtain
\[
\frac{d}{dt}R_\eps(t)\leq \big(3\|\grad_x\omega_\eps(t,\cdot)\|_{L^\infty(\Omega)}+C_1\|\omega_\eps(t,\cdot)\|_{L^\infty(\Omega)} \big)R_\eps(t)^2+C_2R_\eps(t).
\]
The estimate has the form
\begin{equation}\label{eq:Restimate}
\frac{d}{dt}R_\eps(t)\leq a_\eps(t)R_\eps(t)^2+C_2R_\eps(t),
\end{equation}
where $a_\eps$ can be determined by Propositions \ref{prop:omega} and \ref{prop:gradomega}. 

The last inequality \eqref{eq:Restimate} allows us to prove that under appropriate assumptions on $\eps$ and $R(0)$, the function $R_\eps(t)$ in \eqref{re} is bounded globally in time.
\begin{prop}\label{prop:Rlongtime}
There exist $\eps_0>0$ and $R^0>0$ such that for all $\eps\in(0,\eps_0]$, function $R_\eps(t)$ defined in \eqref{re} is bounded for any 
finite time $t$ for both the aggregation system \eqref{eq:aggpara} and 3-zone system \eqref{eq:3zonepara} provided $R(0)\leq R^0$.
\end{prop}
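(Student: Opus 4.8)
The plan is to read \eqref{eq:Restimate} as a Bernoulli (Riccati-type) differential inequality and to linearize it, since the only term capable of sending $R_\eps$ to infinity in finite time is the quadratic one $a_\eps(t)R_\eps^2$, and its coefficient
\[
a_\eps(t)=3\|\grad_x\omega_\eps(t,\cdot)\|_{L^\infty(\Omega)}+C_1\|\omega_\eps(t,\cdot)\|_{L^\infty(\Omega)}
\]
is, by Propositions \ref{prop:omega} and \ref{prop:gradomega}, exponentially small as $\eps\to0$. First I would replace \eqref{eq:Restimate} by the scalar comparison ODE $\dot{\bar R}=a_\eps\bar R^2+C_2\bar R$ with $\bar R(0)=R(0)$, so that by the standard comparison principle $R_\eps(t)\le\bar R(t)$ as long as $\bar R$ exists; it then suffices to show that $\bar R$ does not blow up. Setting $y_\eps:=1/\bar R$ (legitimate while $\bar R>0$) converts the quadratic equation into the linear one $\dot y_\eps=-C_2 y_\eps-a_\eps(t)$, so that finiteness of $\bar R$ is equivalent to positivity of $y_\eps$.

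Next I would integrate using the factor $e^{C_2 t}$, obtaining
\[
e^{C_2 t}y_\eps(t)=y_\eps(0)-\int_0^t e^{C_2 s}a_\eps(s)\,ds\geq \frac{1}{R(0)}-I_\eps,\qquad I_\eps:=\int_0^\infty e^{C_2 s}a_\eps(s)\,ds.
\]
Thus $\bar R$, and hence $R_\eps$, stays finite for every finite $t$ provided the right-hand side is positive, that is provided $1/R(0)>I_\eps$.

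The crux is therefore to bound $I_\eps$. Here I would insert the quantitative estimates $\|\omega_\eps(t,\cdot)\|_{L^\infty(\Omega)}\le e^{-ct/\eps}$ from Proposition \ref{prop:omega} and $\|\grad_x\omega_\eps(t,\cdot)\|_{L^\infty(\Omega)}\le \frac{C_1(e^{C_2 t}-1)}{C_2\eps}e^{-ct/\eps}$ from Proposition \ref{prop:gradomega}, which give
\[
e^{C_2 s}a_\eps(s)\leq \frac{3C_1}{C_2\eps}\left(e^{2C_2 s}-e^{C_2 s}\right)e^{-cs/\eps}+C_1\,e^{(C_2-c/\eps)s}.
\]
The hard part will be that the growth rate $e^{2C_2 s}$ must be beaten by the decay $e^{-cs/\eps}$; this forces the restriction $\eps_0<c/(2C_2)$, under which both integrals converge and evaluate to $O(\eps)$, so that $I_\eps\to0$ as $\eps\to0$ and $I^*:=\sup_{\eps\in(0,\eps_0]}I_\eps<\infty$. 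The delicate point is the interplay between the $1/\eps$ prefactor coming from $\grad_x\omega_\eps$ and the $\eps$-width concentration of $e^{-cs/\eps}$ near $s=0$, where the vanishing factor $e^{C_2 s}-1$ is exactly what keeps that contribution $O(\eps)$ rather than $O(1)$.

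Finally I would fix $\eps_0<c/(2C_2)$ and choose any $R^0$ with $0<R^0<1/I^*$. Then for every $\eps\in(0,\eps_0]$ and every $R(0)\le R^0$ one has $1/R(0)\ge 1/R^0>I^*\ge I_\eps$, whence $e^{C_2 t}y_\eps(t)\ge 1/R^0-I^*>0$ for all $t$ and consequently
\[
R_\eps(t)\leq \frac{e^{C_2 t}}{1/R^0-I^*},
\]
which is finite for each finite $t$ and, crucially, uniform in $\eps$. The aggregation system \eqref{eq:aggpara} is the special case $\grad_x\omega_\eps\equiv0$, for which $a_\eps=C_1\|\omega_\eps\|_{L^\infty(\Omega)}$ and the identical argument applies with an even smaller $I_\eps$; hence both systems are covered. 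Together with Proposition \ref{prop:gbound}, this uniform bound on $R_\eps$ also yields a uniform bound on $G_\eps$, so that $g_\eps$ is non-singular in the sense of \eqref{eq:goodg}.
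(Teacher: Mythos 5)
Your proof is correct, and it takes a genuinely different route from the paper's --- one that in fact yields a stronger conclusion. The paper treats the Riccati inequality \eqref{eq:Restimate} by absorbing the decay of $a_\eps$ into the unknown: for the aggregation system it sets $S_\eps(t)=R_\eps(t)e^{-ct/\eps}$, obtaining $\frac{d}{dt}S_\eps\leq C_1S_\eps^2-\left(\frac{c}{\eps}-C_2\right)S_\eps$, and concludes via the invariant region $\left[0,\frac{c-C_2\eps}{C_1\eps}\right]$ below the unstable equilibrium; for the 3-zone system it first bounds $a_\eps(t)\leq\tilde{C}_1e^{-(c-C_2\eps)t/\eps}$ and repeats the argument, with explicit thresholds $\eps_0=\frac{c}{2C_2}$, $R^0=\frac{C_2}{C_1}$ for the former and $\eps_0=\frac{c}{4C_2}$, $R^0=\frac{C_2}{6C_1}\min\{c,12\}$ for the latter. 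The resulting bounds, e.g. $R_\eps(t)\leq\frac{c-C_2\eps}{C_1\eps}e^{ct/\eps}$, are finite for every finite $t$ but, as the paper itself remarks immediately afterwards, not uniform in $\eps$; uniformity on a finite time window is then recovered separately in Proposition \ref{prop:Runif}. You instead linearize exactly (comparison solution, then $y=1/\bar{R}$) and reduce the whole question to the integral criterion $1/R(0)>I_\eps=\int_0^\infty e^{C_2s}a_\eps(s)\,ds$; your evaluation of $I_\eps$ is correct --- both contributions are $O(\eps)$ once $\eps_0<c/(2C_2)$, the vanishing factor $e^{C_2s}-1$ taming the $1/\eps$ prefactor exactly as you note. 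This buys a bound $R_\eps(t)\leq e^{C_2t}/(1/R^0-I^*)$, with $I^*=\sup_{\eps\in(0,\eps_0]}I_\eps<\infty$, valid for all $t\geq0$ and uniform in $\eps\in(0,\eps_0]$: it not only proves the proposition but also subsumes Proposition \ref{prop:Runif}, upgrading it from a finite interval $[0,T]$ to all times, and (via Proposition \ref{prop:gbound}) gives the non-singularity \eqref{eq:goodg} globally rather than locally in time. Two minor technicalities are worth acknowledging: the inversion requires $\bar{R}>0$, so if $R(0)=0$ one should compare with a small positive initial datum; and the comparison principle needs the right-hand side of \eqref{eq:Restimate} to be continuous in $t$ and locally Lipschitz in $R$, which holds here. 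Neither affects correctness.
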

\begin{proof}
We first consider aggregation system \eqref{eq:aggpara}. From the estimate \eqref{eq:omegalimit} and the fact that $\grad_x\omega_\eps=0$, 
we have
\[a_\eps(t)=C_1\exp\left(-\frac{c}{\eps}t\right).\]
We denote $S_\eps(t):=R_\eps(t)\exp(-\frac{c}{\eps}t)$ and obtain from \eqref{eq:Restimate} the dynamics of $S_\eps$:
\[\frac{d}{dt}S_\eps(t)\leq C_1S_\eps(t)^2-\left(\frac{c}{\eps}-C_2\right)S_\eps(t).\]
We now take $\eps_0=\dfrac{c}{2C_2}$ and $R^0=\dfrac{C_2}{C_1}$ and observe that since $c-C_2\eps>0$, $S_\eps$ has an invariant region 
$\left[0,\dfrac{c-C_2\eps}{C_1\eps}\right]$ and the following inequality holds:
\[
S(0)=R(0)\leq R^0=\frac{C_2}{C_1}=\frac{c-C_2\eps_0}{C_1\eps_0}\leq\frac{c-C_2\eps}{C_1\eps}.
\]
Therefore, $S_\eps(t)\leq\dfrac{c-C_2\eps}{C_1\eps}$ for all $t\geq0$ and we conclude with the bound
\[R_\eps(t)=S_\eps(t)\exp\left(\frac{c}{\eps}t\right)\leq
\frac{c-C_2\eps}{C_1\eps}\exp\left(\frac{c}{\eps}t\right).\]

Next, we turn our attention to the 3-zone system \eqref{eq:3zonepara}. By propositions \ref{prop:omega} and \ref{prop:gradomega}, we have
\begin{equation}\label{eq:3zonea}
a_\eps(t)=C_1\exp\left(-\frac{c}{\eps}t\right)
\left(1+\frac{3(e^{C_2t}-1)}{C_2\eps}\right).
\end{equation}
The extra exponential growth $e^{C_2t}$ in \eqref{eq:3zonea} is due to the estimate of $\grad_x\omega_\eps$. It can be controlled by the 
exponential decay provided $C_2<\dfrac{c}{\eps}$. In fact, we have
\[a_\eps(t)\leq\tilde{C}_1\exp\left(-\frac{c-C_2\eps}{\eps}t\right),
\quad\text{where }\tilde{C}_1=\max\left\{\frac{3C_1}{C_2\eps},C_1\right\}.\]
Using the same argument as the aggregation system, we obtain the following bound:
\[R_\eps(t)\leq\frac{c-2C_2\eps}{\tilde{C}_1\eps}
\exp\left(\frac{c-C_2\eps}{\eps}t\right)\]
as long as $\eps<\eps_0=\dfrac{c}{4C_2}$ and $R(0)\leq R^0=\dfrac{C_2}{6C_1}\cdot\min\{c,12\}$.
\end{proof}

It should be pointed out that the two bounds obtained above are not uniform in $\eps$. Uniform bounds can only be achieved up to a finite 
time, provided that $a_\eps(t)$ is uniformly bounded.
\begin{prop}\label{prop:Runif}
There exists a time $T>0$, such that $R_\eps(t)$ is bounded in $t\in[0,T]$, uniformly in $\eps$.
\end{prop}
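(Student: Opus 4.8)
The plan is to convert the differential inequality \eqref{eq:Restimate} into a Riccati comparison, after first establishing that the coefficient $a_\eps(t)$ is bounded \emph{uniformly} in $\eps$ on a fixed finite interval. For the aggregation system this is immediate, since $a_\eps(t)=C_1\exp(-ct/\eps)\le C_1$. For the 3-zone system the coefficient in \eqref{eq:3zonea} carries a factor growing like $\eps^{-1}$, so the first and most delicate step is to show that the exponential decay $\exp(-ct/\eps)$ coming from Proposition~\ref{prop:omega} absorbs this growth. Writing $a_\eps(t)=C_1\exp(-ct/\eps)+\frac{3C_1}{C_2}\exp(-ct/\eps)\frac{e^{C_2t}-1}{\eps}$ and bounding $e^{C_2t}-1\le C_2\,t\,e^{C_2T}$ for $t\in[0,T]$, I would reduce the problematic term to a multiple of $\frac{t}{\eps}\exp(-ct/\eps)$. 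The elementary bound $\sup_{s\ge0}s\,e^{-cs}=1/(ce)$, applied with $s=t/\eps$, then yields $a_\eps(t)\le \bar a:=C_1+\tfrac{3C_1}{ce}e^{C_2T}$, a constant independent of $\eps$. I expect this balancing, exponential decay dominating an algebraic $\eps^{-1}$ blow-up, to be the conceptual heart of the argument and the main obstacle, since it is precisely what forces the bound to be finite-time rather than global.

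With a uniform coefficient in hand, \eqref{eq:Restimate} reduces to the autonomous Riccati inequality $\frac{d}{dt}R_\eps\le \bar a R_\eps^2+C_2R_\eps$ with $\eps$-independent initial datum $R_\eps(0)=R(0)$. I would compare $R_\eps$ with the solution $\bar R$ of the associated ODE $\frac{d}{dt}\bar R=\bar a\bar R^2+C_2\bar R$, $\bar R(0)=R(0)$. Substituting $y=1/\bar R$ linearizes this to $\dot y+C_2y=-\bar a$, whose solution $y(t)=e^{-C_2t}\big(\tfrac{1}{R(0)}+\tfrac{\bar a}{C_2}\big)-\tfrac{\bar a}{C_2}$ stays positive, and hence $\bar R$ stays finite, exactly for $t<T^\ast:=\frac{1}{C_2}\ln\!\big(1+\tfrac{C_2}{\bar a R(0)}\big)$. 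A standard comparison principle for scalar differential inequalities then gives $R_\eps(t)\le\bar R(t)$ on $[0,T]$ for every $\eps$, so that $R_\eps$ is bounded by $\max_{[0,T]}\bar R$, uniformly in $\eps$.

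One technical point to address carefully is that $\bar a=\bar a(T)$ depends on the interval length through the factor $e^{C_2T}$, while the blow-up time $T^\ast(\bar a)$ is decreasing in $\bar a$; I would close the loop by noting that $\bar a(T)$ is continuous and finite as $T\to0^+$, whereas $T^\ast(\bar a(0))>0$, so by continuity the inequality $T<T^\ast(\bar a(T))$ holds for all sufficiently small $T>0$. Fixing any such $T$ produces the claimed interval on which $R_\eps$ is bounded uniformly in $\eps$, consistent with the observation preceding the statement that only finite-time uniform bounds can be expected. The same $\bar a$ and comparison argument covers the aggregation system verbatim, with the simpler coefficient $\bar a=C_1$.
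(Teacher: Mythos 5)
Your proposal is correct and takes essentially the same approach as the paper: both arguments hinge on the elementary bound $\sup_{s\ge 0}s\,e^{-cs}=1/(ce)$ to show that the $\eps^{-1}$ factor in \eqref{eq:3zonea} is absorbed by the exponential decay from Proposition~\ref{prop:omega}, producing an $\eps$-uniform coefficient in \eqref{eq:Restimate}, after which a finite-time Riccati bound concludes. Your write-up is in fact somewhat more careful than the paper's, since you solve the comparison ODE explicitly and close the loop between the $T$-dependence of $\bar a$ and the blow-up time $T^\ast$ via a continuity argument, a point the paper leaves implicit.
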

\begin{proof}
For the aggregation system \eqref{eq:aggpara}, $a_\eps(t)$ is uniformly bounded by $C_1$ and from \eqref{eq:Restimate} we have
\[\frac{d}{dt}R_\eps(t)\leq C_1R_\eps(t)^2+C_2R_\eps(t),\]
which is a Ricatti-type first order ODE. Therefore, there exists a
finite time $T=T(C_1,C_2,R(0))>0$, such that $R_\eps(t)$ remains finite in
$[0,T]$. Since $T$ does not depend on $\eps$, the bound is uniformly in $\eps$.

For the 3-zone system \eqref{eq:3zonepara}, we use the estimate \eqref{eq:3zonea} to obtain
\[a_\eps(t)\leq C_1+\frac{3C_1(e^{C_2t}-1)}{C_2}\cdot\frac{1}{\eps}
\exp\left(-\frac{c}{\eps}t\right).\]
The $\dfrac{1}{\eps}$ term can be controlled by the exponentially decay, namely,
\[\frac{1}{\eps}\exp\left(-\frac{c}{\eps}t\right)\leq\frac{1}{cet},\]
for all $\eps\in[0,\infty)$, which in turns implies
\[a_\eps(t)\leq C_1+\frac{3C_1}{C_2ce}\cdot\frac{e^{C_2t}-1}{t}.\]
The right hand side in the last inequality is an increasing function in $t$. Therefore, we conclude that $a_\eps(t)$ is bounded by 
$C_1+\frac{3C_1}{C_2ce}\cdot\frac{e^{C_2T}-1}{T}$, which does not depend on $\eps$, and thus according to \eqref{eq:Restimate}, $R_\eps(t)$
is uniformly bounded in $\eps$ for any finite time $t\in[0,T]$.
\end{proof}

Putting everything together, we prove that $g_\eps$ is non-singular. It provides a strong support that our choice of $\omega_\eps$ captures 
the right scaling.

\begin{thm}\label{thm:regular}
Let $(g_\eps,u_\eps,\omega_\eps)$ be the solution triple of the rescaled dynamics \eqref{eq:omega}, \eqref{eq:u} and \eqref{eq:g}. Assume
the solution satisfies the non-oscillatory conditions \eqref{eq:nonosc} and \eqref{eq:gradubound}. Then, there exits a time $T=T(g^0)>0$, 
such that $g_\eps(t)$ is non-singular uniformly in $\eps$ in the sense of \eqref{eq:goodg}, for all $t\in[0,T]$.
\end{thm}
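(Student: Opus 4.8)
The plan is to assemble the theorem directly from the two preceding propositions, which together already control the support radius and the amplitude of $g_\eps$ uniformly in $\eps$. First I would invoke Proposition \ref{prop:Runif} to obtain a time $T>0$, depending only on the non-oscillatory constants $C_1,C_2$ and the initial support radius $R(0)$ --- hence on the initial datum $g^0$ --- for which $R_\eps(t)$ stays bounded on $[0,T]$ uniformly in $\eps$. Setting $R:=\sup_\eps\sup_{t\in[0,T]}R_\eps(t)<\infty$ immediately yields the support condition $\text{supp}_\xi g_\eps(t,x,\xi)\subset B_R(0)$ in \eqref{eq:goodg}.

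With the uniform support bound in hand, the hypothesis of Proposition \ref{prop:gbound} is satisfied on $[0,T]$, so I would apply it to conclude that $G_\eps(t)$ is likewise uniformly bounded in $\eps$ on $[0,T]$. Setting $G:=\sup_\eps\sup_{t\in[0,T]}G_\eps(t)<\infty$ supplies the amplitude condition $\max_\xi|g_\eps(t,x,\xi)|\leq G$ in \eqref{eq:goodg}. Since both $G$ and $R$ are finite and independent of $\eps$, the two displayed conditions of \eqref{eq:goodg} hold simultaneously for all $x\in\Omega$ and $t\in[0,T]$, which is precisely the assertion that $g_\eps$ is non-singular uniformly in $\eps$.

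The substantive analytic work has already been carried out in the earlier propositions: the decay estimate \eqref{eq:omegalimit} on $\omega_\eps$ together with the companion bound on $\grad_x\omega_\eps$ (Propositions \ref{prop:omega} and \ref{prop:gradomega}), the Riccati-type differential inequality \eqref{eq:Restimate} governing $R_\eps$, and the Gronwall argument along the characteristics \eqref{a123} in Proposition \ref{prop:gbound}. Consequently the theorem is essentially a corollary, and I do not anticipate a genuine obstacle in this final step; the only care required is bookkeeping --- verifying that the horizon $T$ furnished by Proposition \ref{prop:Runif} is $\eps$-independent so that it may be taken as the common interval on which both the support and amplitude bounds hold, and recording its dependence $T=T(g^0)$ through $R(0)$ and the fixed constants $C_1,C_2$. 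The one subtlety worth flagging is the \emph{ordering} of the implications: because Proposition \ref{prop:gbound} presupposes a uniform bound on $R_\eps$, the support control from Proposition \ref{prop:Runif} must be established first and then fed into the amplitude estimate, and not the reverse. Note also that for the aggregation system the concluding remark after Proposition \ref{prop:gbound} makes the amplitude step automatic, since $\grad_x\omega_\eps\equiv0$ removes any dependence on the size of the support.
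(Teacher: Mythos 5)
Your proposal is correct and follows essentially the same route as the paper, which presents Theorem \ref{thm:regular} as the direct assembly ("putting everything together") of Proposition \ref{prop:Runif} (uniform bound on $R_\eps$ up to a finite time $T$ depending on $R(0)$, $C_1$, $C_2$) followed by Proposition \ref{prop:gbound} (uniform bound on $G_\eps$ given the support bound), yielding \eqref{eq:goodg} on $[0,T]$. Your remarks on the ordering of the two propositions and on the $\eps$-independence of $T$ match the paper's intended logic exactly.
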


\section{Asymptotic preserving schemes}
\label{sec:AP}
Now we design a asymptotic preserving scheme to solve \eqref{eq:general}. To avoid the singularity limit, we use velocity scaling method
and express the solution $f_\eps$ by the rescaled function $g_\eps$, together with the scaling factor $\omega_\eps$ and macroscopic velocity 
$u_\eps$. We have shown in Theorem \ref{thm:regular} that under our proposed rescaling, $g_\eps$ is non-singular uniformly in $\eps$.
Therefore, we proceed to design AP schemes for the rescaled system, where singularity is no longer an obstacle.

\subsection{AP schemes for the rescaled systems}
Let us recall the dynamics of the solution triple $(g_\eps,u_\eps,\omega_\eps)$ and rewrite equations \eqref{eq:omega}, \eqref{eq:u}
and \eqref{eq:g} in the numerical  friendly conservative representations,
\begin{equation}
\label{eq:rescaledsystem}
\left\{
\begin{aligned}
&\partial_tg_\eps+\grad_x\cdot\left((u_\eps+\omega_\eps\xi)g_\eps\right)\\
&\hspace*{0.8cm} =
\grad_\xi\cdot\left[\left((\xi\cdot\grad_x\omega_\eps)\xi+(\xi\cdot\grad_x)u_\eps-
\frac{1}{\rho_\eps\omega_\eps}\left(\grad_x\cdot(\omega_\eps^2P_\eps)\right)\right)g_\eps\right],\\
&\partial_t(\rho_\eps u_\eps)+\grad_x\cdot(\rho_\eps u_\eps\otimes
u_\eps)+\grad_x\cdot(\omega_\eps^2P_\eps)=\frac{1}{\eps}\rho_\eps\BBB_\eps,\\
&\partial_t\omega_\eps+u_\eps\cdot\grad_x\omega_\eps+\frac{1}{\eps}\omega_\eps\AAA_\eps=0.
\end{aligned}
\right.
\end{equation}
with $\rho_\eps(t,x)$ defined in \eqref{eq:densitymatch} and  satisfying the continuity equation \eqref{eq:rho}.

To obtain an AP scheme for system \eqref{eq:rescaledsystem}, we introduce an increasing sequence $0<t^0<t^1\cdots<t^n\cdots$ of times with 
uniform  time step $\Delta t=t^{n+1}-t^n$ and denote by $q^n$ the value of any unknown quantity $q$ at time $t^n$, i.e., 
$q^n(\cdot)\approx q(t^n,\cdot)$. The canonical first order in time explicit-implicit time discretization for \eqref{eq:rescaledsystem} reads:
\begin{equation}
\label{scm:firstAP}
\left\{
\begin{aligned}
&\frac{g_\eps^{n+1} - g_\eps^n}{\Delta t} + \grad_x\cdot\left((u_\eps^n+\omega_\eps^n\xi)g_\eps^n\right) \\
&\qquad =
\grad_\xi\cdot\left[\left((\xi\cdot\grad_x\omega_\eps^n)\xi + (\xi\cdot\grad_x)u_\eps^n -
\frac{1}{\rho_\eps^n\omega_\eps^n}\left(\grad_x\cdot((\omega_\eps^n)^2P_\eps^n)\right)\right) g_\eps^n\right],\\
&\frac{\rho_\eps^{n+1} u_\eps^{n+1} - \rho_\eps^n u_\eps^n}{\Delta t}  + \nabla_x \cdot(\rho_\eps^n u_\eps^n \otimes u_\eps^n) + \nabla_x \cdot ((\omega_\eps^n)^2 P_\eps^n) = \frac{\rho_\eps^{n+1}}{\eps} \BBB_\eps^{n+1},\\
&\frac{\omega_\eps^{n+1} - \omega_\eps^n}{\Delta t} +u_\eps^n\cdot\grad_x\omega_\eps^n+\frac{1}{\eps}\omega_\eps^{n+1}\AAA_\eps^{n+1}=0,
\end{aligned}
\right.
\end{equation}
where the non-stiff fluxes are treated explicitly and the stiff terms are treated implicitly. To evolve the solution in time, we first
compute $g_\eps^{n+1}$ from the first equation in \eqref{scm:firstAP}, which is fully explicit as $g_\eps$ is non-singular, and its dynamics 
does not explicitly depend on $\eps$. Then, $\rho_\eps^{n+1}$ is obtained from the integration of $g_\eps^{n+1}$ in $\xi$ coordinate.
Next, we use an implicit solver to compute $\rho_\eps^{n+1}u_\eps^{n+1}$ from the second equation. Noting that the operator 
$\rho_\eps\BBB_\eps$ is a symmetric operator on $\rho_\eps u_\eps$, one can simply apply a conjugate-gradient method.
Finally, $\omega_\eps^{n+1}$ can be obtained easily from the third equation since $\AAA_\eps^{n+1}$ only depends on $\rho_\eps^{n+1}$ and 
hence can be computed explicitly.

One can derive a second order time discretization scheme by applying, say, a backward differentiation formula (BDF) on the time derivative, 
an extrapolation on the explicit terms and a fully implicit solver on the stiff terms. We omit the details here and refer the reader to \cite{goudon2013asymptotic,goudon2014asymptotic,wang2016asymptotic}.

A fully discrete scheme should be obtained by consistent spatial and velocity discretizations, for instance, by using a finite volume method
thanks to the conservative structure of the equations; see, e.g., \cite{rey2016exact} for the references. Importantly, since $g_\eps$ is
non-singular, the discretizations are independent of $\eps$.


We summarize the entire procedure of the proposed numerical approach for solving \eqref{eq:general}. Given initial data $f^0$, we set 
$\omega^0\equiv1$, compute $u^0$ by \eqref{eq:u_eps} and $g^0$ by performing velocity scaling transformation $\mathcal{T}_\eps$ in 
\eqref{eq:vs}. Then, we evolve the dynamics \eqref{eq:rescaledsystem} on $(g_\eps,u_\eps,\omega_\eps)$ using appropriate AP scheme, for 
instance \eqref{scm:firstAP}, until a target time $t$. Finally, we apply the inverse transformation $\mathcal{T}_\eps^{-1}$ to obtain the 
solution $f_\eps$ at time $t$. Note that $\mathcal{T}_\eps^{-1}$ has an explicit form
\begin{equation}\label{eq:Tinv}
f_\eps(t,x,v)=\mathcal{T}_\eps^{-1}[g_\eps](t,x,v)=\frac{1}{\omega_\eps(t,x)^d}
g_\eps\left(t,x,\frac{v-u_\eps(t,x)}{\omega_\eps(t,x)}\right),
\end{equation}
which is easy to implement numerically.

\subsection{Asymptotic preserving property}
Now, we verify the AP property of our numerical scheme.

Recall the limiting system of \eqref{eq:general} as $\eps\to0$ satisfies mono-kinetic asymptotes \eqref{eq:asymp} 
$f(t,x,v)=\rho(t,x)\delta_{v=u(t,x)}$, with macroscopic quantities $(\rho,u)$ satisfying
\begin{equation}\label{eq:limiting}
\partial_t\rho+\grad_x\cdot(\rho u)=0,\quad\rho\BBB=0.
\end{equation}

The goal is to check that $f_\eps^n$ converges to $f^n$ as $\eps\to0$, at the discrete level.

As discussed in Section \ref{sec:asymp}, the spatial non-oscillatory conditions \eqref{eq:nonosc} and \eqref{eq:gradubound} play an 
important role and guarantee that $g_\eps$ is non-singular, in the
sense of \eqref{eq:goodg}. This argument, stated in Theorem \ref{thm:regular}, can
be extended to semi-discrete or fully discrete dynamics with appropriate choices of discretizations.

We shall consider the first order scheme \eqref{scm:firstAP} as an example. The semi-discrete version of Theorem \ref{thm:regular} implies
that $g_\eps^n$ is non-singular, namely
\begin{equation}\label{eq:discreteg}
\max_\xi|g_\eps^n(x,\xi)|\leq G,\quad\text{and}\quad\underset{\xi}{\text{supp}}g_\eps^n(x,\xi)\subset B_R(0),\quad
\forall x\in\Omega,
\end{equation}
where $G, R$ are constants independent of $\eps$, if
\begin{equation}\label{eq:discretecond}
|\grad_xg_\eps^n(x,\xi)|\leq C_1g_\eps^n(x,\xi),\quad
\|\grad_xu_\eps^n\|_{L^\infty(\Omega)}\leq C_2,
\end{equation}
for all $x\in\Omega, \xi\in\R^d$,
where $C_1, C_2$ are constants which do not depend on $\eps$.

Assuming \eqref{eq:discretecond} holds, we, first, check $f_\eps^n$ converges to a mono-kinetic profile as $\eps\to0$. It is enough to show 
that for any given $x\in\Omega$, the size of $\text{supp}_vf_\eps^n(x,v)$ tends to 0 as $\eps\to0$. From \eqref{eq:Tinv} and 
\eqref{eq:discreteg}, we obtain
\[|\underset{v}{\text{supp}}f_\eps^n(x,v)|=
\omega_\eps^n(x)|\underset{\xi}{\text{supp}} g_\eps^n(x,\xi)|\leq
2R\omega_\eps^n(x).\]
A semi-discrete version of proposition \ref{prop:omega} implies that $\omega_\eps^n(x)\to0$ as $\eps\to0$, which finishes the proof.
Indeed, 
\[
\|\omega^n_\eps\|_{L^\infty(\Omega)}\leq\frac{\|\omega^{n-1}_\eps\|_{L^\infty(\Omega)}}{1+\frac{\Delta t}{\eps}\mathcal{A}^n}\leq
\frac{\|\omega^{n-1}_\eps\|_{L^\infty(\Omega)}}{1+\frac{c\Delta t}{\eps}}\leq\cdots\leq
\frac{\|\omega^0\|_{L^\infty(\Omega)}}{\left(1+\frac{c\Delta t}{\eps}\right)^n}\leq\exp\left(-\frac{ct}{\eps}\right) \stackrel{\eps\to0}{\longrightarrow}0.
 \]
Here, $\AAA_\eps^n\geq c$ due to Remark \ref{rem:Alow}.

Next, we show that the macroscopic quantities $(\rho_\eps^n, u_\eps^n)$ converges to $(\rho^n, u^n)$, which solves the semi-discrete version
of the limiting system \eqref{eq:limiting}:
\begin{equation}\label{eq:semilimiting}
\frac{\rho^{n+1}-\rho^n}{\Delta t}+\grad_x\cdot(\rho^n u^n)=0,\quad\rho^n\BBB^n=0.
\end{equation}
To this end, we integrate the $g_\eps^n$ equation in \eqref{scm:firstAP} with respect to $\xi$ to obtain
\[\frac{\rho_\eps^{n+1}-\rho_\eps^n}{\Delta
  t}+\grad_x\cdot(\rho_\eps^n u_\eps^n)=0.\]
Clearly, the limiting system as $\eps\to0$ is the first equation in \eqref{eq:semilimiting}.

For the second equation in \eqref{eq:semilimiting}, we rewrite the $u_\eps^n$ equation in \eqref{scm:firstAP} as follows
\[\rho_\eps^{n+1} \BBB_\eps^{n+1}=\eps\left[\frac{\rho_\eps^{n+1}
    u_\eps^{n+1} - \rho_\eps^n u_\eps^n}{\Delta t}  + \nabla_x
  \cdot(\rho_\eps^n u_\eps^n \otimes u_\eps^n) + \nabla_x \cdot
  ((\omega_\eps^n)^2 P_\eps^n)\right],\]
where the right hand side is of order $\mathcal{O}(\eps)$, thanks to the non-oscillatory condition \eqref{eq:discretecond}. Taking the limit
$\eps\to0$, we obtain $\rho^{n+1} \BBB^{n+1}=0$.

It should be observed, that the AP property can be also verified for full discrete schemes. Detailed discretization can be found in, e.g. 
\cite{rey2016exact}.

Note that the discrete non-oscillatory conditions \eqref{eq:discretecond} can be monitored during numerical simulations. Practically, 
instead of monitoring the oscillation on $g_\eps^n(x,\xi)$ for all $x\in\Omega, \xi\in\R^d$, we only need to keep track of the oscillation 
for $\rho$ and $P$, namely the discrete version of conditions \eqref{eq:nonoscrho} and \eqref{eq:nonoscp}. The AP property is guaranteed to 
hold as long as there is no violation of the following assumptions
\begin{equation}\label{eq:nonosce}
\max_x\frac{|\grad_x\rho_\eps^n|}{\rho_\eps^n},~~
\max_x\frac{|\grad_xP_\eps^n|}{\rho_\eps^n},~~
\max_x\grad_xu_\eps^n~\leq C,
\end{equation}
where $C$ is a constant which does not depend on $\eps$.

\section{Numerical experiments}\label{sec:test}
In this section, we demonstrate the performance of the proposed schemes on a number of numerical examples. We note that the
velocity scaling method in Section \ref{sec:vsm} and the resulting AP scheme in Section \ref{sec:AP} are dimension independent. For 
simplicity, numerical simulations are performed on a 1-D by 1-D phase
space, with periodic spatial domain $\Omega=\T=[-\pi,\pi]$. 
In particular, we consider the computation domain 
$(x,\xi)\in[-\pi,\pi]\times [-6,6]$, and pick initial data such that 
$R$ in \eqref{eq:goodg} is much smaller than 6. So, the solution will
vanish at the boundary.
Unless otherwise specified, we always take 
$N_x = 128$ and $N_\xi = 64$ grid points in the phase space. We take $\Delta t = \Delta x / 20$ to satisfy the CFL condition, where 
$\Delta x$ is spatial mesh size.

In this section, we focus on the 3-zone system (\ref{eq:3zone}). The aggregation system (\ref{eq:agg}) can be solved similarly. The alignment kernel $\phi$  of the 3-zone system is given by
\[\phi(x) = \frac{1}{\sqrt{1+x^2}}.\]
In the Examples 1--3 below, the interaction is modeled by the Morse potential
\[K(x) = -e^{-|x|/2} + e^{-|x|}.\]

\subsection{Example 1 -- Validation of the assumptions}
The first test is to check whether the spatial non-oscillatory assumption \eqref{eq:nonosce} is valid for a typical initial value problem of
\eqref{eq:general}.
The rescaled system \eqref{eq:rescaledsystem} is numerically solved subject to the initial data
\[
g^0(x,\xi) = \rho^0(x) M(\xi), \quad \rho^0(x) = 1+e^{-20(x-1)^2}+e^{-20(x+1)^2}, \quad u^0(x) = 0, \quad \omega^0(x) = 1,
\]
where $M(\xi) = \frac{1}{\sqrt{2\pi}} e^{-\xi^2/2}$.

We track the time evolution of $\max\limits_x\frac{|\grad_x \rho_\eps|}{\rho_\eps}$,
$\max\limits_x\frac{|\grad_x P_\eps|}{\rho_\eps}$ and $\max\limits_x|\grad_x u_\eps|$, for different values of $\eps$. The results shown in Figure \ref{fig:checkC1C2} suggest that the assumption \eqref{eq:nonosce} is valid and the bounds are uniform with respect to $\eps$. 
\begin{figure}[ht!]
\centerline{\includegraphics[width=.325\textwidth]{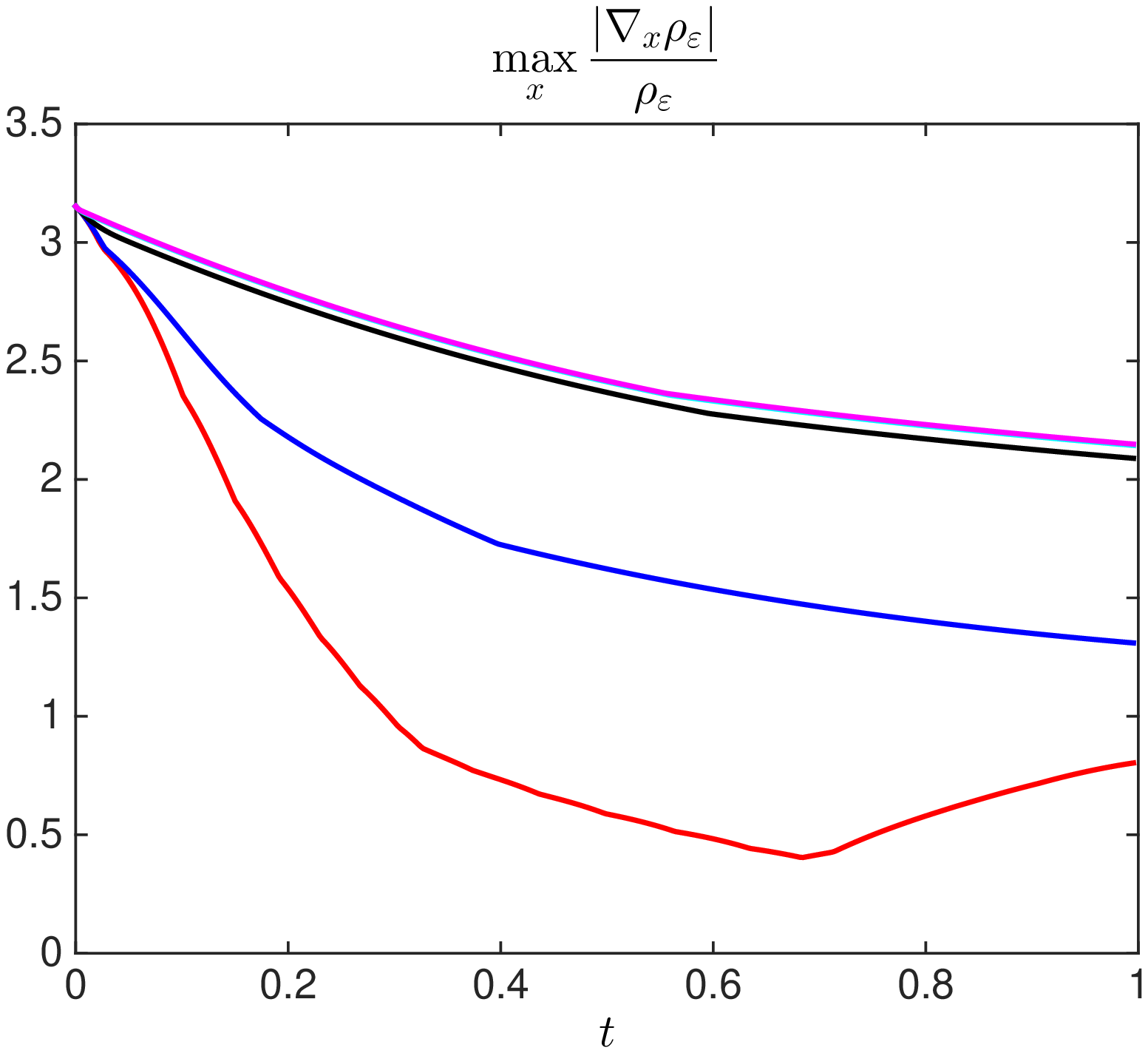}\includegraphics[width=.325\textwidth]{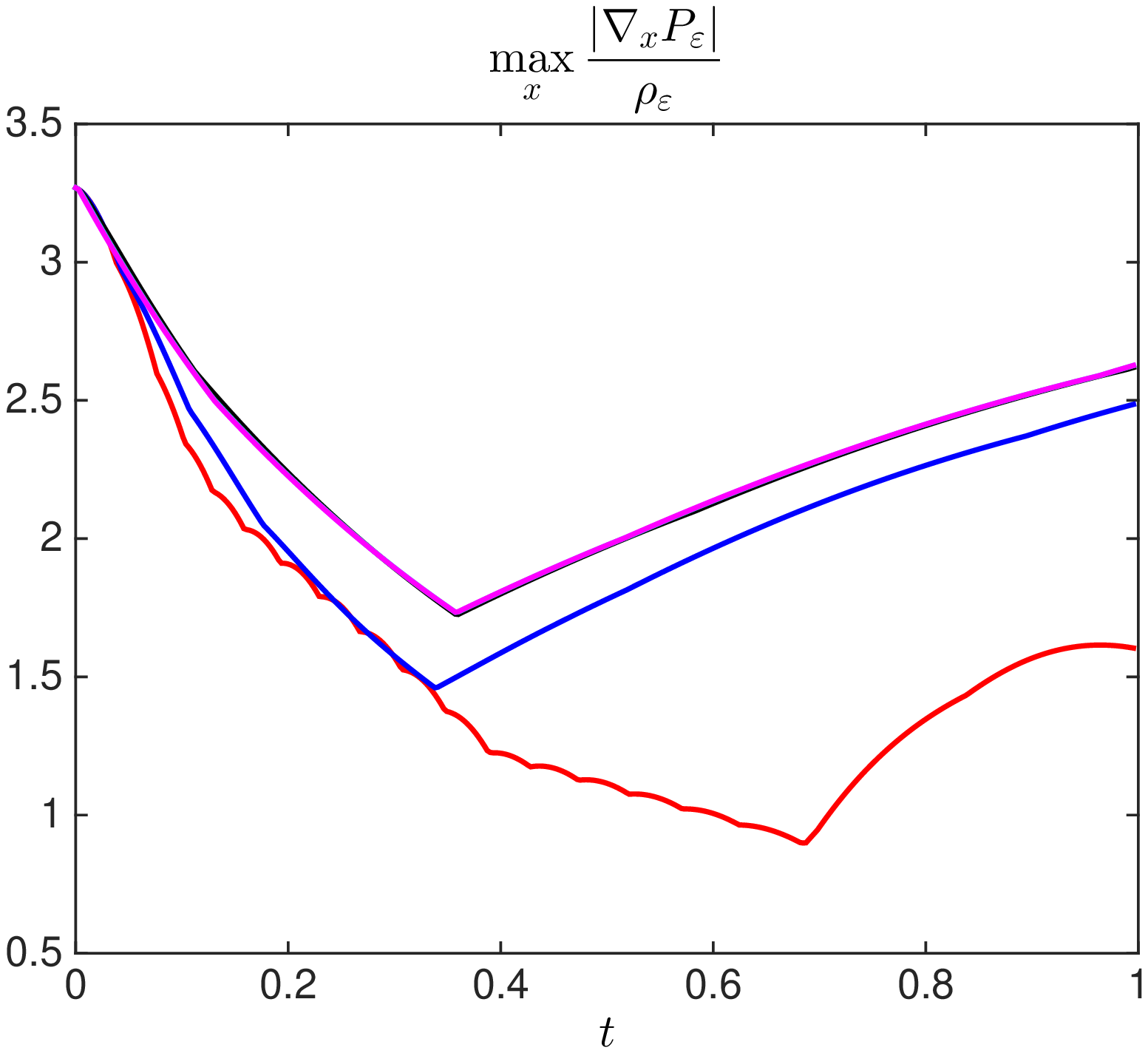}
\includegraphics[width=.325\textwidth]{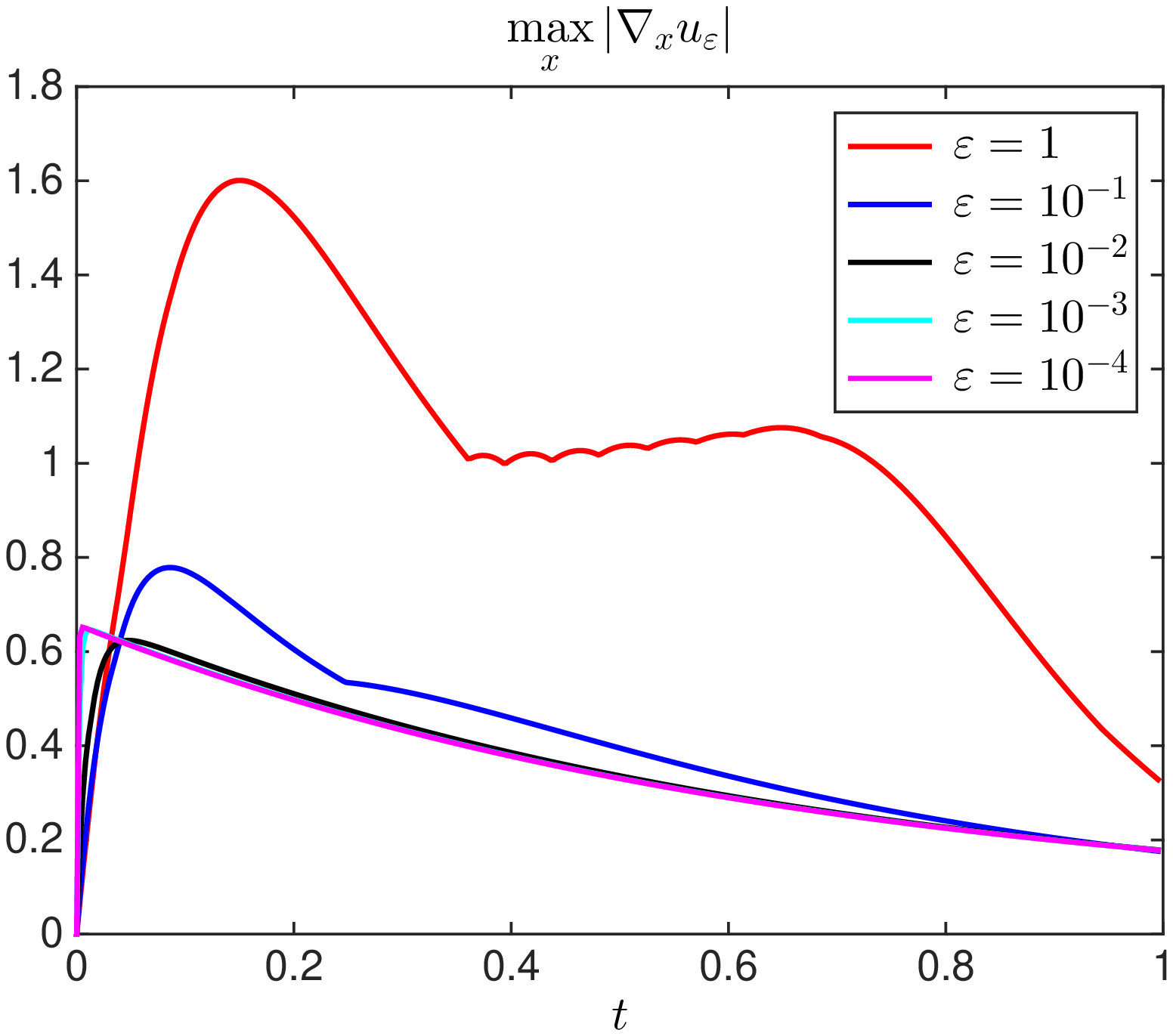}}
\caption{Example 1: The test on assumption \ref{eq:nonosce}. From left to right: the 
time evolution of $\max\limits_x |\grad_x u_\eps|$, $\max\limits_x \frac{|\grad_x \rho_\eps|}{\rho_\eps}$ and 
$\max\limits_x \frac{|\grad_x P_\eps|}{\rho_\eps}$ for different values of $\eps$. The lines for $\eps=10^{-3}$ and $\eps=10^{-4}$ are almost overlapped.}
\label{fig:checkC1C2}
\end{figure}

\subsection{Example 2 -- Consistency test}
In this example, we verify that the solution to the rescaled system \eqref{eq:rescaledsystem} is consistent with the original system 
\eqref{eq:general}. The original system is integratedsimulations in time by the forward Euler method, while the rescaled system is evolved by the AP
scheme  (\ref{scm:firstAP}). The original system is very difficult to solve for small $\eps$ and long time, due to the fact that the solution $f_\eps$ is approaching a singular delta function in velocity space. Hence, we taksimulationse $\eps=1$ and run the simulations until
the final time $t=0.7$ in this test. $N_v = 512$ points are used in solving the original system (compare with $N_\xi = 64$ for the rescaled system).

The following initial condition for the original system \eqref{eq:general} is used,
\begin{equation*}
\label{eq:numeric_ini_consistency}
f^0(x,v)= \frac{\rho^0(x)}{2\sqrt{0.4\pi}} \left( e^{-\frac{(v + \sin(x))^2}{0.4}} + e^{-\frac{(v - \sin(x))^2}{0.4}} \right),\quad 
\rho^0(x)= 1+e^{-20(x-1)^2}+\frac{3}{2}e^{-20(x+1)^2},
\end{equation*}
which is equivalent to the rescaled system \eqref{eq:rescaledsystem} solved subject to the initial condition
\begin{equation*}
\label{eq:numeric_ini_g_consistency}
\begin{aligned}
&g^0(x,\xi)= \frac{\rho^0(x)}{2\sqrt{0.4\pi}} \left( e^{-\frac{(\xi + \sin(x))^2}{0.4}} + e^{-\frac{(\xi-\sin(x))^2}{0.4}}\right),\\
&\rho^0(x)= 1+e^{-20(x-1)^2}+\frac{3}{2}e^{-20(x+1)^2},\quad u^0(x)=0, \quad \omega^0(x) = 1.
\end{aligned}
\end{equation*}

Time snapshots of the density $\rho_1(x)$ and the macroscopic velocity $u_1(x)$ at different time $t$ are compared in the top of Figure \ref{fig:consistency}. The solutions to different systems are almost identical, demonstrating that the rescaled system is consistent with the original system. In the bottom of Figure \ref{fig:consistency}, we show the distributions $f_1(x,v)$ (left, solved from the original system) and $g_1(x,\xi)$ (right, solved from the rescaled system) at time $t=0.7$. As one can see, $f_1(x,v)$ is getting concentrated in the velocity space, making it difficult to simulate with fixed grid points. In contrast, $g_1(x,\xi)$ has a finite support in the rescaled velocity space $\xi$ and a fixed grid in $\xi$ can be used for the simulation.
\begin{figure}[ht!]
    \includegraphics[width=.48\textwidth]{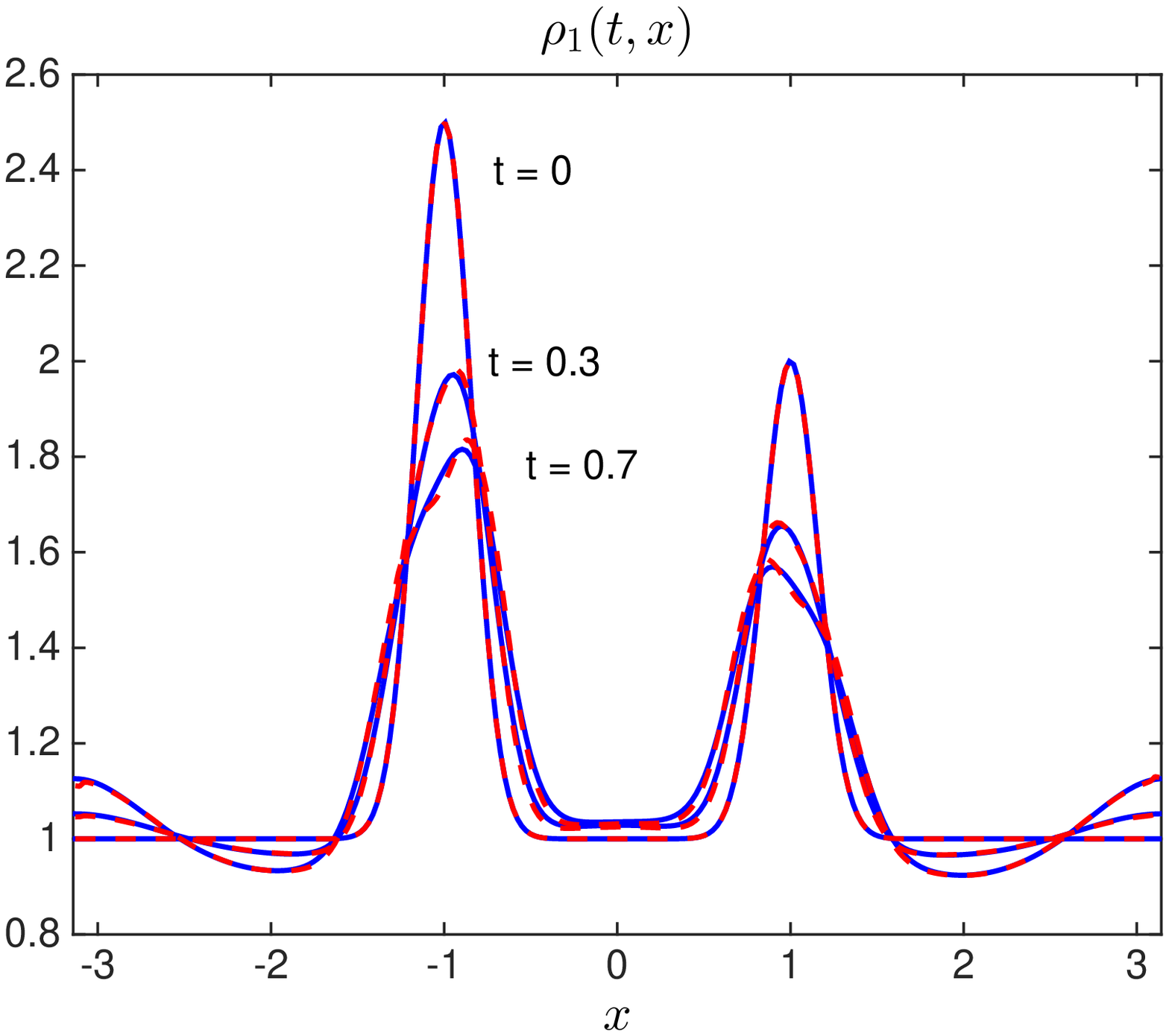}
    \includegraphics[width=.48\textwidth]{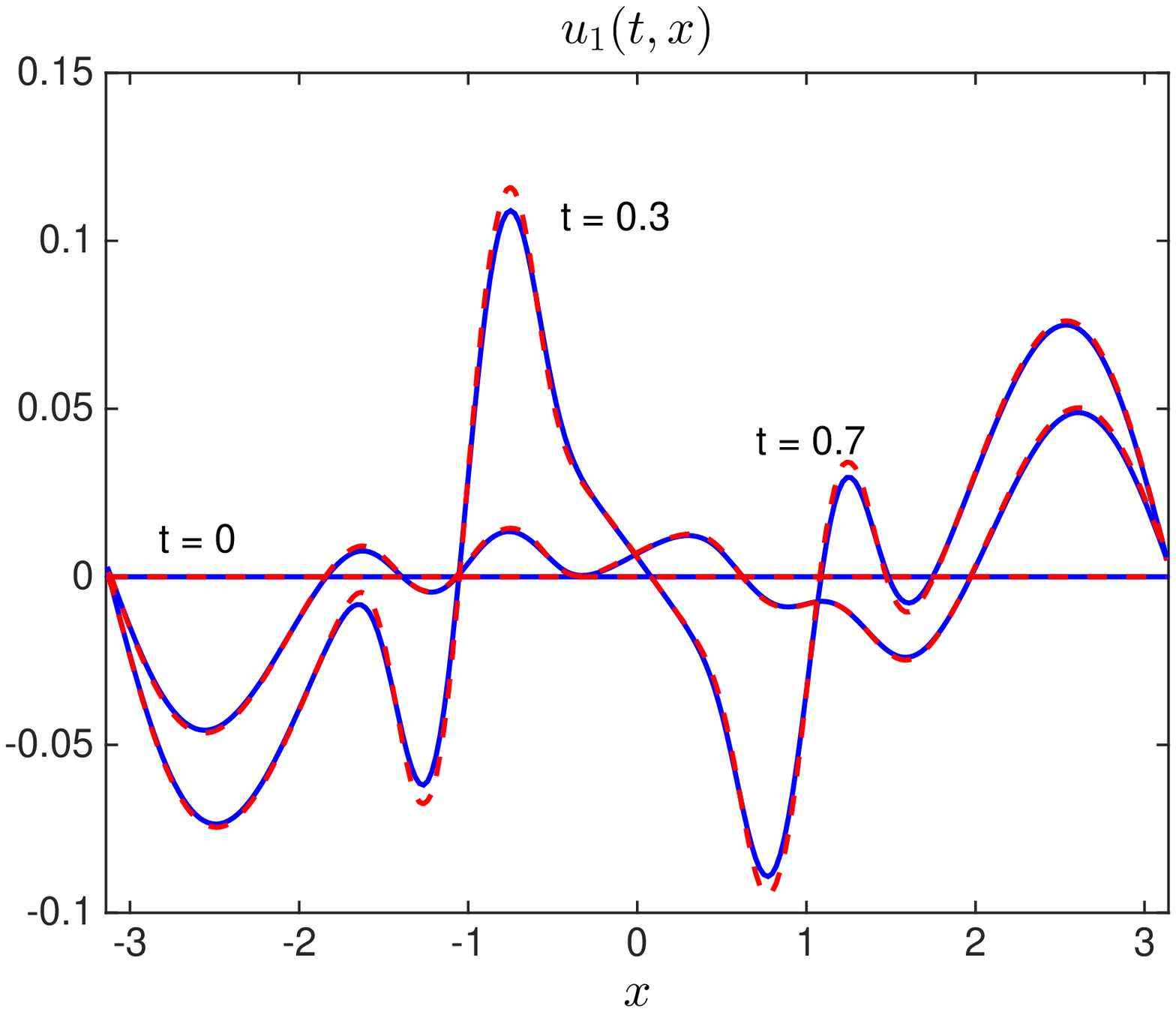}
    \includegraphics[width=.48\textwidth]{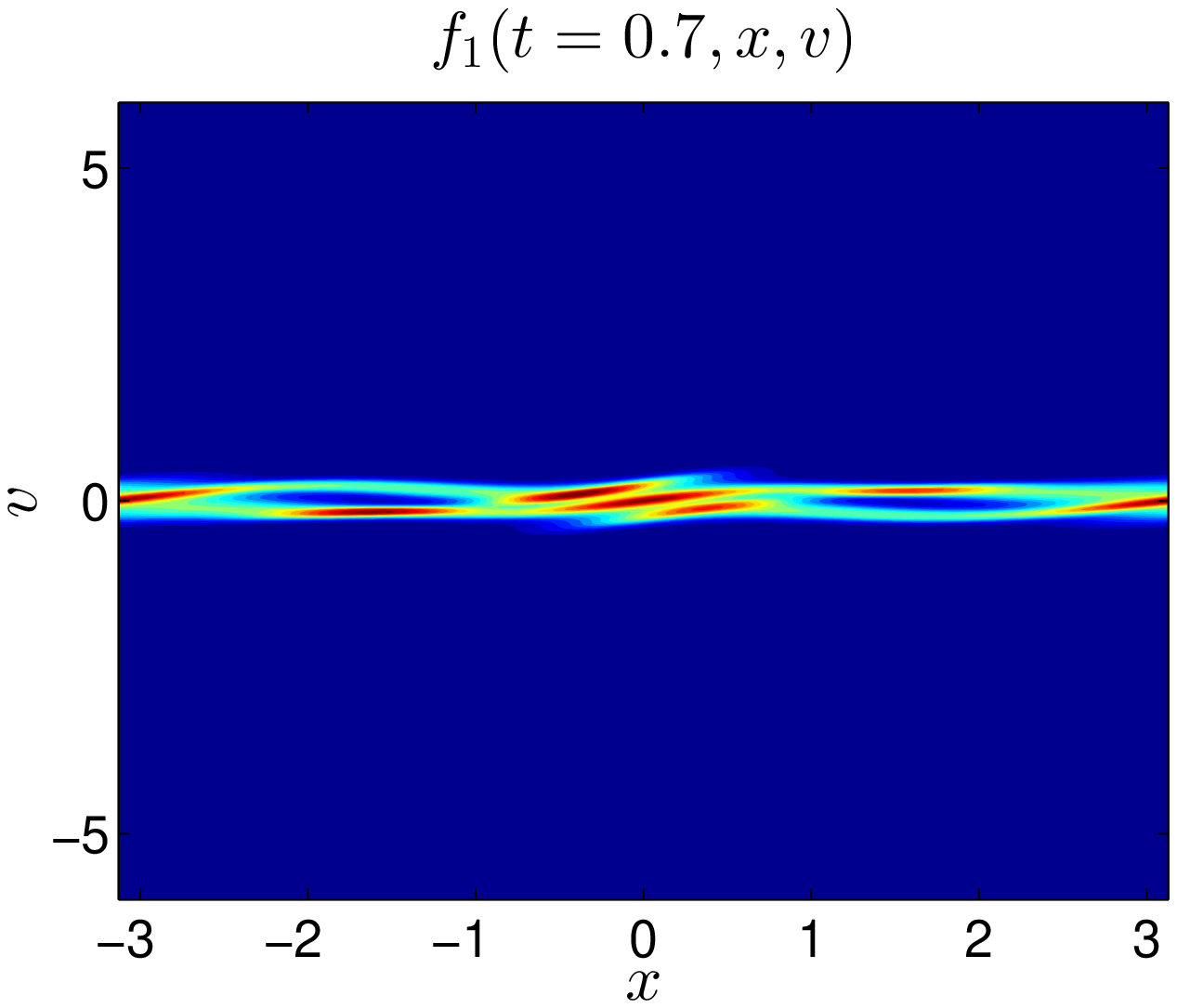}
    \includegraphics[width=.48\textwidth]{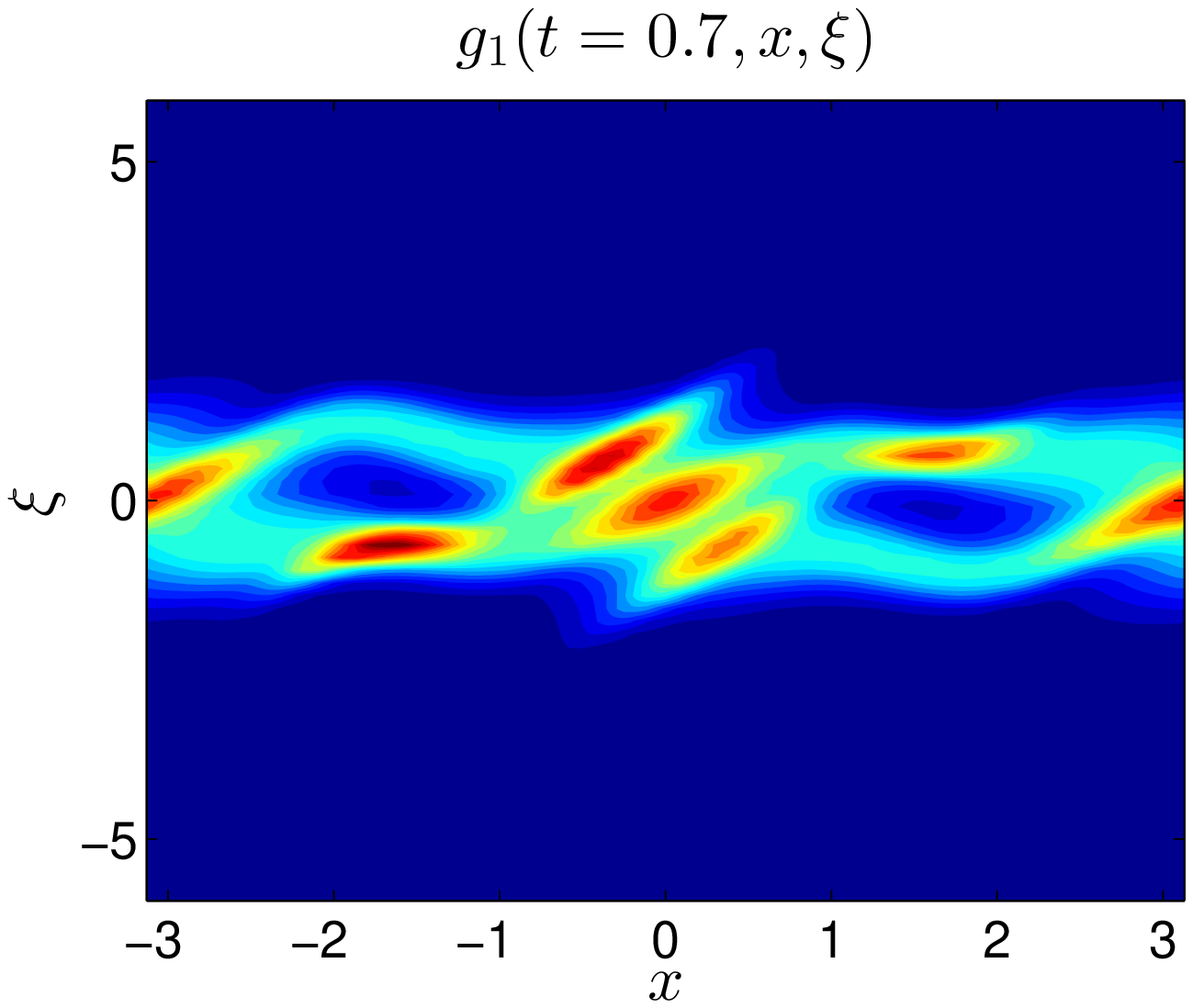}
\caption{Example 2: Top left: the time evolution of $\rho_1(x)$ solved from the original system \eqref{eq:general} (blue solid lines) and 
the rescaled system \eqref{eq:rescaledsystem} (red dashed lines). Top right: the time evolution of $u_1(x)$ solved from the original system 
\eqref{eq:general} (blue solid lines) and the rescaled system \eqref{eq:rescaledsystem} (red dashed lines). Bottom left: the distribution 
$f_1(x,v)$ at time $t=0.7$ solved from the original system \eqref{eq:general}. Bottom right: the distribution $g_1(x,\xi)$ at time $t=0.7$ 
solved from the rescaled system \eqref{eq:rescaledsystem}.}
\label{fig:consistency}
\end{figure}

\subsection{Example 3 - Asymptotic preserving test}
Now we test the AP property of the scheme (\ref{scm:firstAP}). More specifically, we compare the solutions of (\ref{scm:firstAP}) with vanishing $\eps$ to the solution of the limiting system (\ref{eq:limitsystem}). We use the following initial data
\[g^0(x,\xi) = \rho^0(x) M(\xi), \quad \rho^0(x) = 0.01+e^{-20 x^2}, \quad u^0(x) = 0, \quad \omega^0(x) = 1,\]
for the scheme (\ref{scm:firstAP}). The limiting system
(\ref{eq:limitsystem}) with initial condition $(\rho^0,u^0)$ is
well-posed with momentum conservation condition
\[\int_\Omega\rho(t,x)u(t,x)dx=0,\quad\forall~t\geq0.\]
 We refer to \cite{fetecau2016first} for
analysis and numerical schemes for the limiting system.

The comparison of the density $\rho_\eps(x)$ and macroscopic velocity $u_\eps(x)$ at time $t = 1$ is given in Figure \ref{fig:APtest}. 
Different $\eps$'s are used for the scheme (\ref{scm:firstAP}). The results clearly demonstrate that as $\eps$ vanishes, the solution 
obtained from (\ref{scm:firstAP}) approach the solution to the limiting system, demonstrating the AP property of (\ref{scm:firstAP}).
\begin{figure}[ht!]
    \includegraphics[width=.48\textwidth]{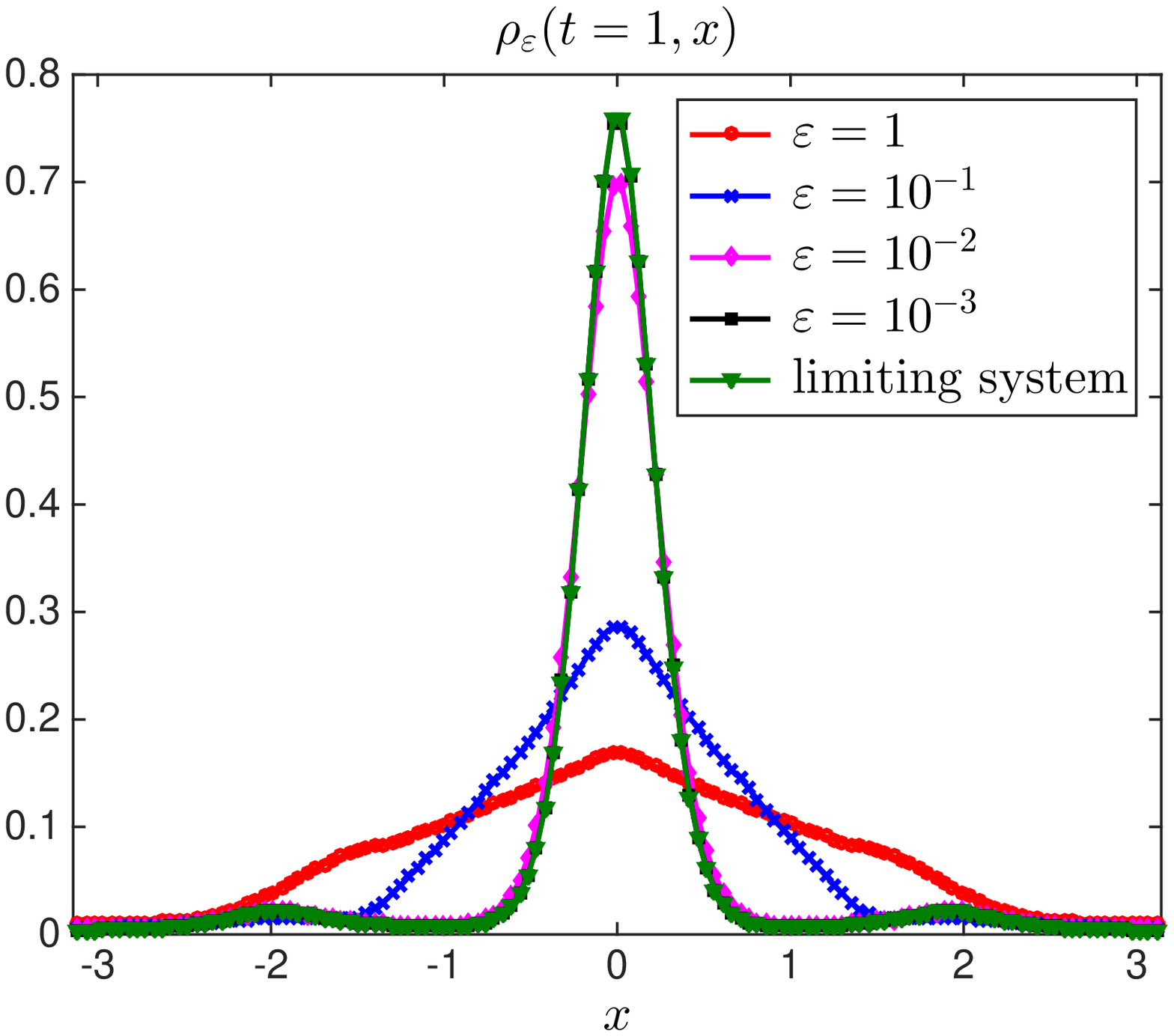}
    \includegraphics[width=.48\textwidth]{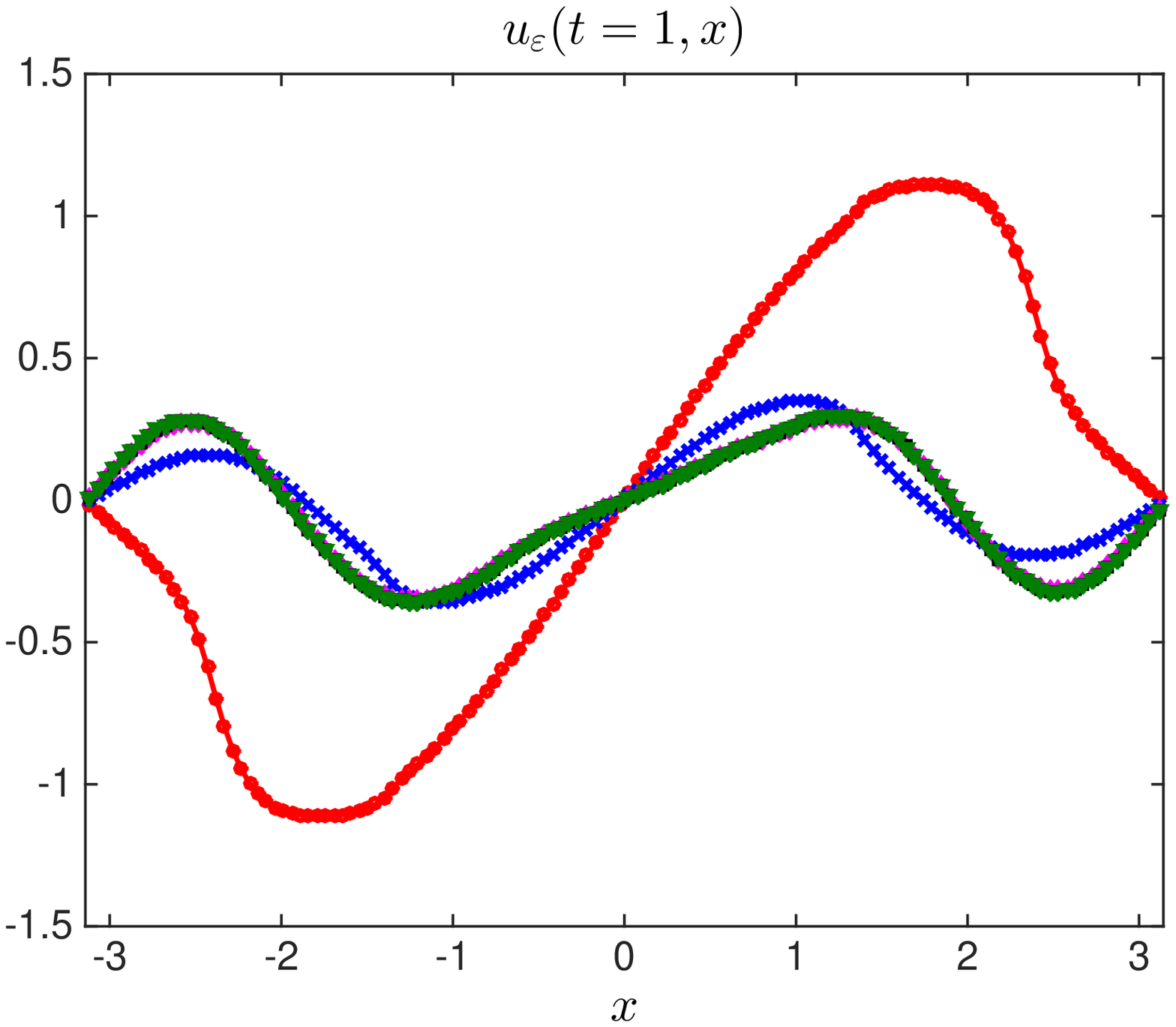}
    \caption{Example 3:  The density $\rho_\eps(x)$ (left) and the macroscopic velocity $u_\eps(x)$ (right) at time $t=1$ computed by the 
    scheme (\ref{scm:firstAP}) with different $\eps$'s are present, as well as that of the limiting system (\ref{eq:limitsystem}). The lines 
    corresponding to $\eps = 10^{-3}$ almost overlap with the lines of limiting system.}
    \label{fig:APtest}
\end{figure}

\subsection{Example 4 -- Application}
In this last example, we apply the numerical method developed in this work to an application problem. We solve the aggregation system 
(\ref{eq:agg}) with a rescaled Morse potential
\[K(x) = -e^{-|x|} + e^{-2|x|}\] 
and subject to the following initial data
\begin{equation}
\label{eq:num_initial_application}
\left\{
\begin{aligned}
g^0(x,\xi) &= \frac{\rho^0(x)}{2\sqrt{0.4\pi}} \left( e^{-\frac{(\xi +2)^2}{0.4}} + e^{-\frac{(\xi - 2)^2}{0.4}}\right)  \\
\rho^0(x) &= 10^{-8}+e^{-40x^2}, \quad u^0(x) = 0, \quad \omega^0(x) = 1,
\end{aligned}
\right.
\end{equation}
which describe two groups of agents in the same location moving to opposite directions. 

The strength of interactions between agents are characterized by the value of $\eps$. In Figure \ref{fig:application_eps0}, we take 
$\eps = 1$, hence a weak interaction is used. Time snapshots of the distribution $g_1(x,\xi)$, the density $\rho_1(x)$, the momentum 
$\rho_1(x)u_1(x)$ and the scaling factor $\omega_1(x)$ at different times are provided. It can be observed that the two groups continue 
moving toward opposite directions and eventually are separated from each other. The scaling factor $\omega_1(x)$ decays to $0$ uniformly in 
$x$. The alignment begins to dominate after a long time simulation, driving the momentum $\rho_1u_1$ to zero.

In Figure \ref{fig:application_eps4}, we plot the solution of same problem with a strong interaction by taking $\eps = 10^{-4}$. The effects 
of alignment and attraction/repulsion are much stronger than the free transport. The alignment plays a role in two aspects. First, it pushes 
$\omega_\eps$ to 0 immediately, describing all agents in the same location moving with the same velocity. This makes the two groups stick 
together. Second, after a long time, the alignment drives the momentum $\rho_\eps u_\eps$ to zero for all $x$, hence forming a flocking 
pattern. The attraction/repulsion determines the shape of this pattern. In Figure \ref{fig:application_eps4}, we also include the stationary 
solution (see e.g. \cite{fetecau2016first}) of the limiting system \eqref{ageq} and note that it agrees very well with the long time profile of the 
aggregation system.
%

\begin{figure}[ht!]
    \includegraphics[width=\textwidth]{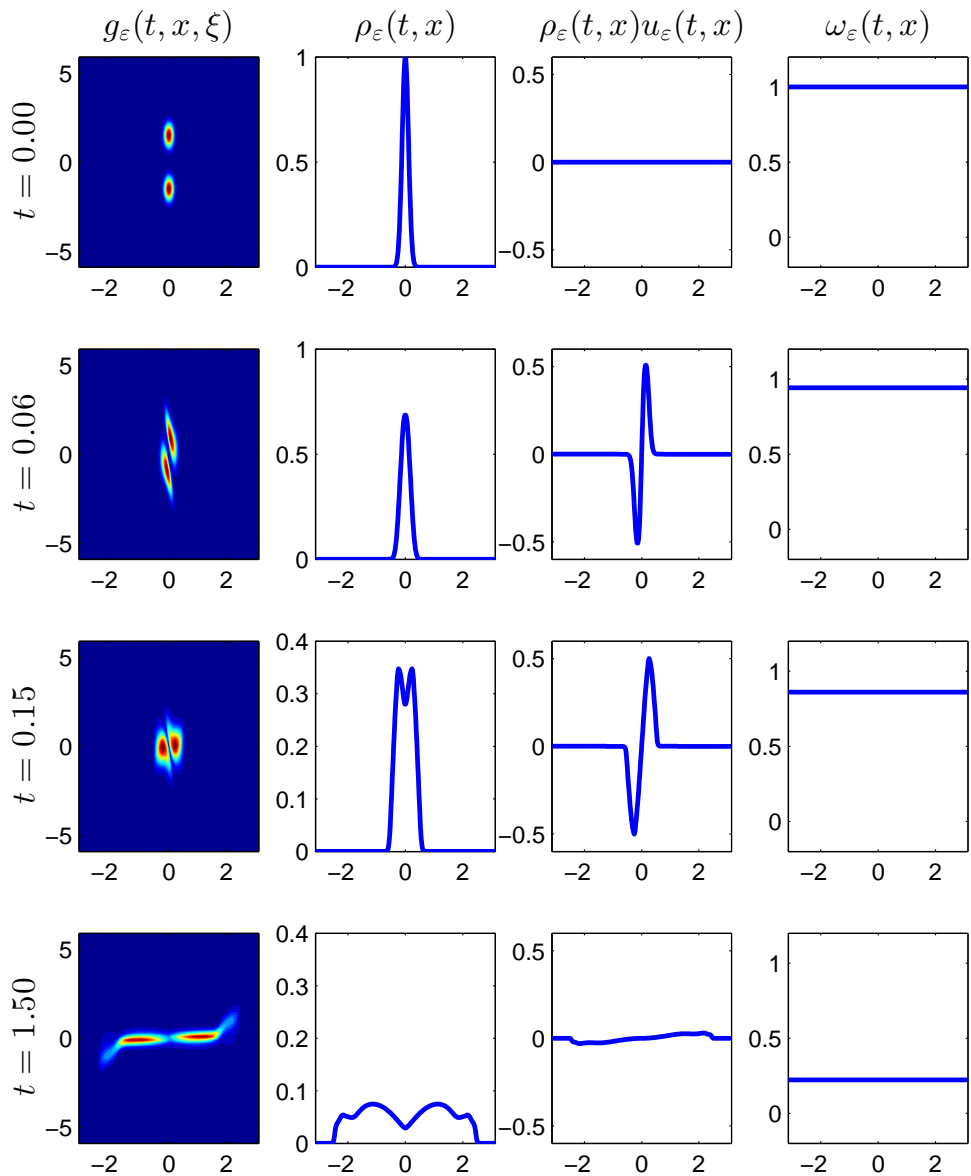}
    \caption{Example 4: Time snapshots of the solution to the aggregation system. From left to right: the distribution $g_\eps(x,\xi)$, the 
    density $\rho_\eps(x)$, the momentum $\rho_\eps(x)u_\eps(x)$ and the scaling factor $\omega_\eps(x)$. In this test $\eps = 1$.\vfill~}
    \label{fig:application_eps0}
\end{figure}
\begin{figure}[ht!]
    \includegraphics[width=\textwidth]{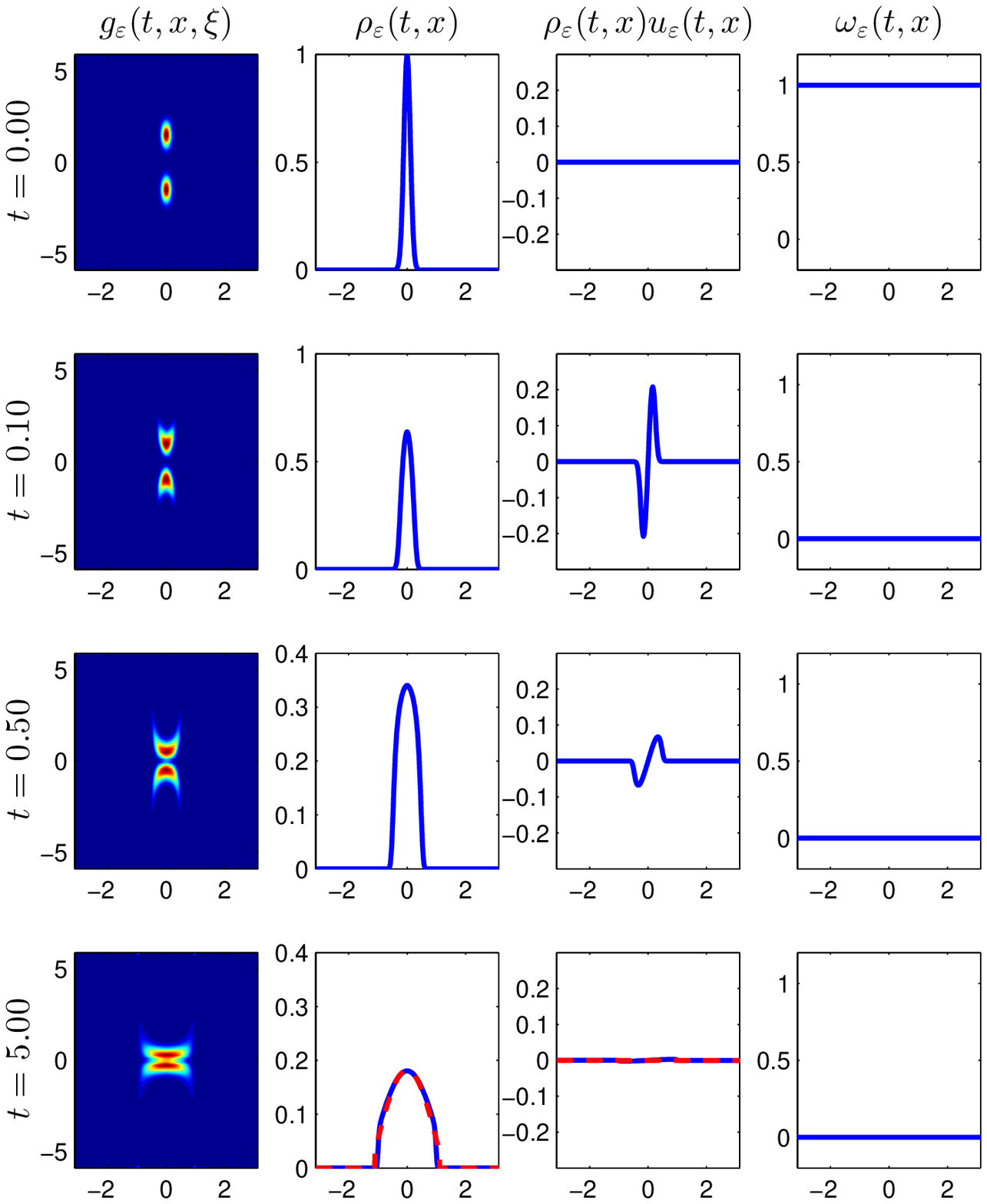}
    \caption{Example 4: Time snapshots of the solution to the aggregation system. From left to right: the distribution $g_\eps(x,\xi)$, the density $\rho_\eps(x)$, the momentum $\rho_\eps(x)u_\eps(x)$ and the scaling factor $\omega_\eps(x)$. In this test $\eps = 10^{-4}$. The stationary solution $\rho$ and $\rho u$ of the limiting system (\ref{eq:limitsystem}) is illustrated by red dashed lines in the last row.}
    \label{fig:application_eps4}
\end{figure}
\medskip
\begin{acknowledgment}
The work of A. Chertock was supported in part by NSF Grant DMS-1521051. A. Chertock and C. Tan acknowledge the support by NSF RNMS Grant DMS-1107444 (KI-Net). 
\end{acknowledgment}

\bibliographystyle{plain}
\bibliography{flocking}

\end{document}